\newtheorem{The}{Theorem}
\newtheorem{Lem}[The]{Lemma}
\newtheorem{Prop}[The]{Proposition}
\newtheorem{Cor}[The]{Corollary}
\newcommand{\C}{\mathbb{C}}
\newcommand{\R}{\mathbb{R}}
\newcommand{\N}{\mathbb{N}}
\newcommand{\Z}{\mathbb{Z}}
\newcommand{\E}{\mathcal{E}}
\newcommand{\F}{\mathcal{F}}
\begin{document}
 \title[An integral theorem for plurisubharmonic functions]
 {An integral theorem for plurisubharmonic functions}

\setcounter{tocdepth}{1}

  \author{Hoang-Son Do} 
\address{Institute of Mathematics \\ Vietnam Academy of Science and Technology \\18
Hoang Quoc Viet \\Hanoi \\Vietnam}
\email{hoangson.do.vn@gmail.com , dhson@math.ac.vn}
 \date{\today}


\begin{abstract}
  In this paper, we prove an integral theorem 
  for Cegrell class $\mathcal{F}(f)$ and use this result to study the $\F$-equivalence relation.
\end{abstract}

\maketitle

\section*{Introduction}
 Let $\Omega\subset\C^n\, (n\geq 2)$ be a bounded hyperconvex domain. 
 Following \cite{Ceg98, Ceg04,  ACCP09}, we denote
\begin{center}
	$\E_0(\Omega)=\{u\in PSH^-(\Omega)\cap L^\infty (\Omega): \lim_{z\to\partial\Omega}u(z)=0, \int_\Omega (dd^cu)^n<\infty\},$\\ 
	$\F(\Omega)=\{u\in PSH^-(\Omega): \exists \, \{u_j\}\subset \E_0(\Omega), \; u_j\searrow u, \, \sup_{j}\int_{\Omega} (dd^cu_j)^n<\infty\}$ ,\\
	$\mathcal{E}(\Omega)
	=\{ u\in PSH^-(\Omega):\forall K\Subset\Omega, \exists u_K\in\mathcal{F}(\Omega)
	\mbox{ such that } u_K=u \mbox{ on } K\},$
\end{center}
and for every $f\in PSH^-(\Omega)$,
\begin{center}
	$\F(\Omega, f)=\{ u\in PSH^-(\Omega): \exists v\in\F\mbox{ such that } v+f\leq u\leq f \}.$
\end{center}
The class $\E$ is the largest subclass of $PSH^-(\Omega)$ on which the complex Monge-Amp\`ere operator 
is well-defined \cite{Ceg04, Blo06}. The class $\F$ is
the subclass of $\E$ containing those functions with
smallest maximal plurisubharmonic majorant identically zero and with finite total
Monge-Amp\`ere mass. If $f\in\E$ then $\F (f)\subset\E$.

Our main result is the following:
\begin{The}\label{main} Suppose  $(X, d, \mu)$ is a totally bounded metric probability space
	and $u, f: \Omega\times X\rightarrow [-\infty, 0]$ are measurable functions such that
	\begin{itemize}
		\item [(i)]  For every $a\in X$, $f(\cdot, a)\in\E$.
		\item [(ii)] For every $a\in X$, $u(\cdot, a)\in\F (f(., a))$ and
		\begin{center}
			$\int\limits_{U_a}(dd^cu(z, a))^n\leq (M(a))^n$,
		\end{center}
		where $M\in L^1(X)$ is given and $U_a=\Omega\cap\overline{\{z\in\Omega: u(z, a)<f(z,a)\}}$.
		\item[(iii)] The function $a\mapsto u(z, a)$ is upper semicontinuous in $X$ for  every $z\in\Omega$.
		\item [(iv)] The function $a\mapsto e^{f(z, a)}$ is lower semicontinuous in $X$ 
		for  every $z\in\Omega$.
		\item [(v)] The function $\tilde{f}(z):=\int\limits_Xf(z, a)d\mu(a)$ is not identically $-\infty$.
	\end{itemize}
	Then  $\tilde{u}(z):=\int\limits_Xu(z, a)d\mu(a)\in\F (\tilde{f})$. In particular, if
	$\tilde{f}\in\E$ then $\tilde{u}\in\E$ and $\int_{\Omega}(dd^c\tilde{u})^n<\infty$.
\end{The}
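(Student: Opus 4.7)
The plan is to split the conclusion $\tilde u\in\F(\tilde f)$ into two parts: (I) $\tilde u,\tilde f\in PSH^-(\Omega)$ with $\tilde u\le\tilde f$, and (II) existence of $\tilde v\in\F$ with $\tilde v+\tilde f\le\tilde u$. For (I), Fubini applied to the measurable functions $f,u$ on $\Omega\times X$ gives measurability of $\tilde f,\tilde u$ in $z$; the submean inequality on balls $B\Subset\Omega$ reduces, by swapping the order of integration, to the submean inequality for each $f(\cdot,a),u(\cdot,a)$. Upper semicontinuity in $z$ follows from the pointwise USC of $f(\cdot,a),u(\cdot,a)$ in $z$ combined with the reverse Fatou lemma (valid because $u,f\le 0$). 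Condition (v) ensures $\tilde f\not\equiv-\infty$, and integrating $u(\cdot,a)\le f(\cdot,a)$ gives $\tilde u\le\tilde f$; the bound $\tilde u\not\equiv-\infty$ will come for free once (II) is established.

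The heart of the proof is part (II), which I would attack by finite approximation exploiting the total boundedness of $X$. For each $\epsilon>0$, build a finite measurable partition $X=\bigsqcup_{i=1}^{N_\epsilon}A_i^\epsilon$ with $\mathrm{diam}(A_i^\epsilon)<\epsilon$, pick $a_i^\epsilon\in A_i^\epsilon$, and set $\mu_i^\epsilon:=\mu(A_i^\epsilon)$. For each $i$, let $v_i^\epsilon\in\F$ be the upper regularized envelope of $\{w\in PSH^-(\Omega):w+f(\cdot,a_i^\epsilon)\le u(\cdot,a_i^\epsilon)\}$; a sublemma extracted from hypothesis (ii) should give $\int_\Omega(dd^cv_i^\epsilon)^n\le M(a_i^\epsilon)^n$. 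The finite convex combinations
\[
\tilde u_\epsilon:=\sum_i\mu_i^\epsilon u(\cdot,a_i^\epsilon),\quad \tilde f_\epsilon:=\sum_i\mu_i^\epsilon f(\cdot,a_i^\epsilon),\quad \tilde v_\epsilon:=\sum_i\mu_i^\epsilon v_i^\epsilon,
\]
then satisfy $\tilde v_\epsilon+\tilde f_\epsilon\le\tilde u_\epsilon$ and $\tilde v_\epsilon\in\F$, and by Cegrell's Minkowski-type inequality $\bigl(\int_\Omega(dd^c\tilde v_\epsilon)^n\bigr)^{1/n}\le\sum_i\mu_i^\epsilon M(a_i^\epsilon)$.

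Passing to the limit $\epsilon\to 0$, conditions (iii) and (iv) should yield $\tilde u_\epsilon\to\tilde u$ and $\tilde f_\epsilon\to\tilde f$ in senses compatible with the inequalities (perhaps after approximating $u(z,\cdot)$ from above and $f(z,\cdot)$ from below by continuous functions on $X$), while $\sum_i\mu_i^\epsilon M(a_i^\epsilon)$ stays bounded by a constant multiple of $\|M\|_{L^1(X,\mu)}$ for a careful choice of representatives. A uniform bound on the Monge-Amp\`ere masses then delivers, via standard compactness in $\F$, a limit $\tilde v\in\F$ with $\tilde v+\tilde f\le\tilde u$. The main obstacles I anticipate are: (a) the sublemma tying $\int_\Omega(dd^cv_i^\epsilon)^n$ to $\int_{U_{a_i^\epsilon}}(dd^cu(\cdot,a_i^\epsilon))^n$ so that (ii) transfers to the desired bound on $v_i^\epsilon$, which requires relating the mass of $u$ on $\overline{\{u<f\}}\cap\Omega$ to the mass of the envelope; and (b) the uniform control of the Riemann-type sums for the merely $L^1$ weight $M$, which will likely require a regularization argument or a Vitali-type covering to reduce to evaluations at Lebesgue points of $M$.
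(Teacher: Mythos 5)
Your overall strategy (discretize $X$ via total boundedness, produce the $\F$-minorant by envelopes, control Monge-Amp\`ere masses by Cegrell's Minkowski-type inequality) is close in spirit to the paper's, but the order in which you carry out the steps creates genuine gaps that the paper's proof is structured precisely to avoid. The paper first proves, for each \emph{fixed} $a$, that the envelope $\phi(u)(\cdot,a)=(\sup\{v\in PSH^-(\Omega):v+f(\cdot,a)\le u(\cdot,a)\})^*$ lies in $\F$ and satisfies $(dd^c\phi(u)(\cdot,a))^n\le\chi_{U_a}(dd^cu(\cdot,a))^n$ (Proposition \ref{prop supersol}); it then uses (iii) and (iv) to check that $a\mapsto\phi(u)(z,a)$ is upper semicontinuous, so that the whole problem reduces to the case $f\equiv 0$ (Proposition \ref{prop f=0}), where only one family of functions has to be discretized. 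Your anticipated obstacle (a) --- the sublemma tying $\int_\Omega(dd^cv_i^\epsilon)^n$ to $\int_{U_{a_i^\epsilon}}(dd^cu(\cdot,a_i^\epsilon))^n$ --- is exactly Proposition \ref{prop supersol}, and it is the technical heart of the paper: its proof goes through the Guedj--Lu--Zeriahi theory of plurisubharmonic envelopes and viscosity supersolutions (Lemmas \ref{lem1 supersol} and \ref{lem2 supersol}). Flagging it as an obstacle does not discharge it, and without it the mass bound on $\tilde v_\epsilon$ has no source.

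The limit passage $\epsilon\to 0$ also does not work as described. First, $\tilde u_\epsilon=\sum_i\mu_i^\epsilon u(\cdot,a_i^\epsilon)$ is a Riemann sum with arbitrarily chosen tags of a function that is merely upper semicontinuous in $a$, and such sums need not converge to $\int_Xu(z,a)\,d\mu(a)$; the paper instead uses $\sum_k\mu(U_{j,k})\sup_{a\in U_{j,k}}u(z,a)$, which by (iii) and Fatou decreases to $\tilde u$. If you treat $f$ the same way you need $\inf$ over cells (because of (iv)), and then the envelope $v_i^\epsilon$ taken at a single sample point no longer matches the functions appearing in your inequality, so $\tilde v_\epsilon+\tilde f_\epsilon\le\tilde u_\epsilon$ cannot be passed to the limit: $\tilde u_\epsilon$ must approach $\tilde u$ from above while $\tilde f_\epsilon$ approaches $\tilde f$ from below, along the same partition. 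Second, ``standard compactness in $\F$'' from a uniform mass bound is not available off the shelf: Proposition \ref{lem appr} requires almost-everywhere convergence of the $\tilde v_\epsilon$ to a candidate limit, and one must also rule out $\tilde v_\epsilon\to-\infty$; the paper obtains both at once by arranging a monotone decreasing sequence. The architecture therefore has to be reorganized --- envelope first for each $a$, then discretization of the single resulting family that is upper semicontinuous in $a$ --- before the proposal becomes a proof.
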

This result follows the plurisubharmonic version of
 \cite[Theorem 2.6.5]{Kli91} in the direction of focusing on the conservation of
the existence of Monge-Amp\`ere measures. We are not sure that the conditions (iii) and (iv) are necessary but
we need these conditions in our proof. Our method is as follows: 
 we solve the problem for the case $f\equiv 0$, then we use plurisubharmonic envelopes to reduce
 the problem to the case  $f\equiv 0$. In the first step, we consider a decreasing sequence of
 functions $u_j\in\F$ and prove that $\lim_{j\to\infty}u_j\in\F$. Then we use the condition (iii) to
 show that $u=\lim_{j\to\infty}u_j$. In the last step, we need the conditions (iii) and (iv) to reduce
 the problem to the case  $f\equiv 0$.

For $u_1, u_2\in\E(\Omega)$, we say that $u_1$ is $\F$-equivalent to $u_2$ if there exist $v_1, v_2\in\F$ 
such that $u_1+v_1\leq u_2$ and $u_2+v_2\leq u_1$. Observe that $u_1$ is $\F$-equivalent to $u_2$
iff $u_1, u_2\in\F (\max\{u_1, u_2\})$. The following result is an immediate corollary of Theorem \ref{main}:
\begin{Cor} Suppose  $(X, d, \mu)$ is a totally bounded metric probability space
	and $u, v: \Omega\times X\rightarrow [-\infty, 0]$ are measurable functions such that
	\begin{itemize}
		\item [(i)] For every $a\in X$, $u(\cdot, a), v(\cdot, a)\in\E(\Omega)$ and
		\begin{center}
			$\int\limits_{U_a}(dd^cu(z, a))^n
			+\int\limits_{V_a}(dd^cv(z, a))^n\leq (M(a))^n$,
		\end{center}
		where $M\in L^1(X)$ is given, $U_a=\Omega\cap\overline{\{z\in\Omega: u(z, a)<v(z,a)\}}$
		and $V_a=\Omega\cap\overline{\{z\in\Omega: v(z, a)<u(z,a)\}}$.
	 \item [(ii)] For every $a\in X$,  $u(\cdot, a)$ is $\F$-equivalent to $v(\cdot, a)$.
		\item [(iii)] The functions $a\mapsto e^{u(z, a)}$ and $a\mapsto e^{v(z, a)}$ are continuous in $X$ 
		for  every $z\in\Omega$.
	\end{itemize}
	Then  $\tilde{u}(z):=\int\limits_Xu(z, a)d\mu(a)\in\E$ iff
	 $\tilde{v}(z):=\int\limits_Xv(z, a)d\mu(a)\in\E$. Moreover, if $\tilde{u}, \tilde{v}\in\E$ then
	  $\tilde{u}$ is $\F$-equivalent to $\tilde{v}$.
\end{Cor}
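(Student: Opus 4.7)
The plan is to reduce the corollary to two applications of Theorem \ref{main} by introducing $f(z,a):=\max\{u(z,a),v(z,a)\}$ as a common ``ceiling''. The observation recorded just before the corollary states that the $\F$-equivalence of $u(\cdot,a)$ and $v(\cdot,a)$ is the same as $u(\cdot,a),v(\cdot,a)\in\F(f(\cdot,a))$, which is precisely the containment required by hypothesis (ii) of Theorem \ref{main}. So the strategy is: apply the theorem to $(u,f)$ and to $(v,f)$ to obtain $\tilde u,\tilde v\in\F(\tilde f)$, and read off both the equivalence $\tilde u\in\E\Leftrightarrow\tilde v\in\E$ and the $\F$-equivalence of $\tilde u$ and $\tilde v$ from this common membership.

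First I would verify the hypotheses of Theorem \ref{main} for this $f$. Condition (i), $f(\cdot,a)\in\E$, is standard since $\E$ is closed under pairwise maxima. For (ii) with the pair $(u,f)$, one has $\{u(\cdot,a)<f(\cdot,a)\}=\{u(\cdot,a)<v(\cdot,a)\}$, so its closure in $\Omega$ is exactly $U_a$; the Monge-Amp\`ere bound of the corollary then supplies $\int_{U_a}(dd^c u(\cdot,a))^n\leq M(a)^n$, and symmetrically for $(v,f)$ with $V_a$. Condition (iii) follows from the fact that $e\colon[-\infty,0]\to[0,1]$ is a homeomorphism, so continuity of $a\mapsto e^{u(z,a)}$ and of $a\mapsto e^{v(z,a)}$ transfers to continuity (hence upper semicontinuity) of $u(z,\cdot)$ and $v(z,\cdot)$. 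Condition (iv) is then automatic, since $e^{f(z,\cdot)}=\max\{e^{u(z,\cdot)},e^{v(z,\cdot)}\}$ is continuous.

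By symmetry, it suffices to prove ``$\tilde u\in\E\Rightarrow\tilde v\in\E$ together with the $\F$-equivalence''. Assuming $\tilde u\in\E$ gives $\tilde u\not\equiv-\infty$; since $\tilde f\geq\tilde u$, this verifies condition (v). Theorem \ref{main} applied to $(u,f)$ and to $(v,f)$ will then deliver $\tilde u,\tilde v\in\F(\tilde f)$, so in particular $\tilde f$ is plurisubharmonic. Because $\tilde u\leq\tilde f\leq 0$ with $\tilde u\in\E$, a standard closure property of the Cegrell class $\E$ (namely that $\E$ is stable under domination by a plurisubharmonic function $\leq 0$) puts $\tilde f$ in $\E$, and the ``In particular'' clause of Theorem \ref{main} then yields $\tilde v\in\E$ with finite Monge-Amp\`ere mass.

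To conclude, I would write the $\F(\tilde f)$-memberships as $w_1+\tilde f\leq\tilde u\leq\tilde f$ and $w_2+\tilde f\leq\tilde v\leq\tilde f$ with $w_1,w_2\in\F$. Combining the lower estimate for one function with the upper estimate for the other gives $\tilde u+w_2\leq\tilde f+w_2\leq\tilde v$ and, symmetrically, $\tilde v+w_1\leq\tilde u$, which is exactly the definition of $\F$-equivalence. I do not expect a serious obstacle; the only subtle points are the identification of $\{u<f\}$ with $\{u<v\}$ (and its counterpart for $v$) in checking (ii), and the invocation of the closure property of $\E$ used to promote $\tilde f$ into $\E$ before applying the final clause of Theorem \ref{main}.
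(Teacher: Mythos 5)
Your proposal is correct and follows exactly the route the paper intends: the observation preceding the corollary (that $\F$-equivalence of $u(\cdot,a)$ and $v(\cdot,a)$ means $u(\cdot,a),v(\cdot,a)\in\F(\max\{u(\cdot,a),v(\cdot,a)\})$) is precisely the reduction to Theorem \ref{main} with $f=\max\{u,v\}$, and your verification of hypotheses (i)--(v), including the identification $\{u<f\}=\{u<v\}$ and the promotion of $\tilde f$ to $\E$ via $\tilde u\leq\tilde f\leq 0$, fills in the details the paper leaves implicit by calling the corollary ``immediate.''
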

In the next section, we  recall briefly some properties of the class $\F$ and
plurisubharmonic envelopes that  will be used to prove the main theorem.

\medskip
\noindent\textbf{Acknowledgement.} This paper was partially written while the author
visited Vietnam Institute for Advanced Study in Mathematics(VIASM). He would like to thank  
this institution for its hospitality and support.  
\section{Preliminaries}
\subsection{The class $\F$}
We recall some properties of the class $\F$. The reader can find more details in \cite{Ceg04, NP09}.

The following proposition is a corollary of \cite[Proposition 5.1]{Ceg04}:
\begin{Prop}\label{MA in F}
	Suppose $u\in\F (\Omega)$. If $u_j\in\E_0(\Omega)$
	decreases to $u$ as $j\rightarrow\infty$ then
	\begin{center}
		$\lim\limits_{j\to\infty}\int\limits_{\Omega}(dd^cu_j)^n
		=\int\limits_{\Omega}(dd^cu)^n.$
	\end{center}
	In particular, $\int\limits_{\Omega}(dd^cu)^n<\infty$.
\end{Prop}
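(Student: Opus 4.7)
The plan is to deduce Proposition~\ref{MA in F} from \cite[Proposition 5.1]{Ceg04} after first establishing that the given sequence $\{u_j\}$ has uniformly bounded Monge--Amp\`ere masses. The point is that the definition of $\F(\Omega)$ provides only \emph{one} distinguished decreasing approximation in $\E_0(\Omega)$ with bounded masses; what must be shown is that an \emph{arbitrary} such approximation has the same property.

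First, by the definition of $\F(\Omega)$, fix a reference sequence $v_k\in\E_0(\Omega)$ with $v_k\searrow u$ and $M:=\sup_k\int_\Omega(dd^cv_k)^n<\infty$. Consider the double sequence $w_{j,k}:=\max(u_j,v_k)$, which again lies in $\E_0(\Omega)$ as the maximum of two such functions. For each fixed $j$, the relation $v_k\searrow u\le u_j$ forces $w_{j,k}\searrow u_j$ as $k\to\infty$.

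Next, invoke the standard monotonicity of total Monge--Amp\`ere mass on $\E_0$: if $\varphi\ge\psi$ in $\E_0(\Omega)$ then $\int_\Omega(dd^c\varphi)^n\le\int_\Omega(dd^c\psi)^n$, a consequence of Cegrell's integration-by-parts machinery on $\E_0$. Applied to $w_{j,k}\ge v_k$, this gives $\int_\Omega(dd^cw_{j,k})^n\le\int_\Omega(dd^cv_k)^n\le M$. Letting $k\to\infty$ and using the Bedford--Taylor continuity of total mass along decreasing sequences in $\E_0$ with uniformly bounded masses, we conclude $\int_\Omega(dd^cu_j)^n\le M$ for every $j$.

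With this in hand, $\{u_j\}\subset\E_0(\Omega)$ decreases to $u$ with $\sup_j\int_\Omega(dd^cu_j)^n\le M<\infty$, so a direct application of \cite[Proposition 5.1]{Ceg04} yields $\lim_j\int_\Omega(dd^cu_j)^n=\int_\Omega(dd^cu)^n$, and in particular $\int_\Omega(dd^cu)^n\le M<\infty$. The only real obstacle is producing the uniform bound in the second step; the $\max$-trick is what transfers the distinguished bound enjoyed by $\{v_k\}$ to the arbitrary approximation $\{u_j\}$, and this is what makes the statement follow as a corollary.
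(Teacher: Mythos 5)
Your proposal is correct and ultimately rests on the same input as the paper, which gives no argument beyond declaring the proposition a corollary of \cite[Proposition 5.1]{Ceg04}. The preliminary $\max(u_j,v_k)$ step you add to obtain the uniform mass bound for an arbitrary decreasing approximation is sound --- it amounts to the monotonicity of total mass (Proposition \ref{cor NP compa}, restricted to $\E_0$) combined with lower semicontinuity of the mass of $\Omega$ under Bedford--Taylor weak convergence --- and merely makes explicit a point that Cegrell's cited proposition already handles for arbitrary decreasing sequences in $\E_0$.
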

\begin{Prop}\label{lem mix MA}\cite[Corollary 5.6]{Ceg04} Suppose $u_1,...,u_n\in\F (\Omega)$. Then
	\begin{center}
		$\int\limits_{\Omega}dd^cu_1\wedge...\wedge dd^cu_n
		\leq \left(\int\limits_{\Omega} (dd^cu_1)^n\right)^{1/n}...
		\left(\int\limits_{\Omega} (dd^cu_n)^n\right)^{1/n}.$
	\end{center}
\end{Prop}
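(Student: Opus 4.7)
The plan is to reduce the inequality to the case of bounded functions in $\E_0(\Omega)$ and then prove it by an iterated Cauchy--Schwarz argument. For the reduction, for each $i$ I would pick a decreasing sequence $u_i^{(k)} \in \E_0(\Omega)$ with $u_i^{(k)} \searrow u_i$ and $\sup_k \int_\Omega (dd^c u_i^{(k)})^n < \infty$, which is possible by the very definition of $\F(\Omega)$. By Proposition \ref{MA in F} (together with its polarized version, which follows from Bedford--Taylor convergence for locally bounded decreasing sequences combined with the uniform mass bound), the mixed masses pass to the limit,
\begin{equation*}
\int_\Omega dd^c u_1^{(k)} \wedge \cdots \wedge dd^c u_n^{(k)} \longrightarrow \int_\Omega dd^c u_1 \wedge \cdots \wedge dd^c u_n,
\end{equation*}
and likewise each pure term $\int_\Omega (dd^c u_i^{(k)})^n \to \int_\Omega (dd^c u_i)^n$. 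So it suffices to prove the inequality assuming every $u_i \in \E_0(\Omega)$.

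The core tool is a bilinear Cauchy--Schwarz inequality: for $u, v \in \E_0(\Omega)$ and any current of the form $T = dd^c w_1 \wedge \cdots \wedge dd^c w_{n-2}$ with $w_j \in \E_0(\Omega)$,
\begin{equation*}
\int_\Omega dd^c u \wedge dd^c v \wedge T \le \left(\int_\Omega (dd^c u)^2 \wedge T\right)^{1/2} \left(\int_\Omega (dd^c v)^2 \wedge T\right)^{1/2}.
\end{equation*}
To prove this, I would first smooth out $u, v, w_j$ by standard regularization in $\E_0(\Omega)$, integrate by parts (legitimate because the approximants vanish on $\partial\Omega$) to rewrite $\int dd^c u \wedge dd^c v \wedge T$ as $\int du \wedge d^c v \wedge T$ up to sign, recognize the expression as a symmetric bilinear form $B(u,v)$, and note that $B(u,u) = \int du \wedge d^c u \wedge T \ge 0$ by positivity. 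Standard Cauchy--Schwarz for positive semi-definite symmetric forms then gives the inequality, and monotone convergence recovers it for the original $\E_0$ data.

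The final step is iteration. Apply the bilinear Cauchy--Schwarz to the pair $(u_1, u_2)$ with $T = dd^c u_3 \wedge \cdots \wedge dd^c u_n$, bounding $\int dd^c u_1 \wedge \cdots \wedge dd^c u_n$ by the geometric mean of two integrals in which each of $u_1, u_2$ appears twice. Repeat the procedure on each factor, ``symmetrizing'' a new pair at each stage; after finitely many steps each $u_i$ appears $n$ times in one of the final factors, producing $\prod_i (\int_\Omega (dd^c u_i)^n)^{1/n}$. This is the classical H\"older-type telescoping and reduces to a straightforward induction on the number of distinct arguments.

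The main obstacle is justifying the integration by parts within the Cegrell class $\E_0(\Omega)$: functions there vanish at $\partial\Omega$ only in a weak (pluripotential) sense, so one must either approximate by smooth psh functions and pass to the limit through Bedford--Taylor-type convergence, or invoke Cegrell's capacity and comparison machinery to discard boundary terms directly. A secondary technical point is the convergence of mixed Monge--Amp\`ere masses in the reduction step, which must be obtained from a polarized version of Proposition \ref{MA in F} rather than that statement alone.
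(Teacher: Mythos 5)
The paper offers no proof of this proposition; it is quoted directly as \cite[Corollary 5.6]{Ceg04}, so your proposal has to be measured against Cegrell's argument. Your overall strategy (reduce to $\E_0(\Omega)$, establish a bilinear Cauchy--Schwarz inequality for mixed Monge--Amp\`ere masses, then iterate \`a la H\"older) is indeed the right one and is essentially Cegrell's. The reduction step and the final iteration are fine modulo standard convergence results for decreasing sequences in $\E_0$.

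However, the justification of the key bilinear inequality contains a genuine error. You claim to integrate by parts and ``rewrite $\int_\Omega dd^c u \wedge dd^c v \wedge T$ as $\int_\Omega du \wedge d^c v \wedge T$ up to sign''. This does not typecheck: with $T$ of bidegree $(n-2,n-2)$, the form $du \wedge d^c v \wedge T$ has degree $2n-2$, not $2n$, so the right-hand side is not even a well-defined integral over $\Omega$. The identity $\int \varphi\, dd^c\psi \wedge S = -\int d\varphi\wedge d^c\psi\wedge S$ requires a $0$-form factor $\varphi$, and in $\int_\Omega dd^c u \wedge dd^c v \wedge T$ there is none; by Stokes this total mass is a boundary quantity, and the positive semi-definiteness of the pairing $(u,v)\mapsto \int_\Omega dd^c u\wedge dd^c v\wedge T$ on \emph{differences} of plurisubharmonic functions is precisely the non-trivial point, not something that follows from ``$du\wedge d^c u\geq 0$''. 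The standard repair is to introduce an auxiliary weight: for $h\in\E_0(\Omega)$ consider $B_h(u,v)=\int_\Omega (-h)\,dd^c u\wedge dd^c v\wedge T$; two integrations by parts (with boundary terms vanishing because $h$ and the approximants tend to $0$ on $\partial\Omega$) give, for $w=u-v$,
\begin{equation*}
B_h(w,w)=\int_\Omega (-w)\, dd^c h\wedge dd^c w\wedge T=\int_\Omega dw\wedge d^c w\wedge dd^c h\wedge T\geq 0,
\end{equation*}
since $dw\wedge d^c w\geq 0$ for \emph{any} real $w$ and $dd^c h\wedge T$ is a closed positive current. Cauchy--Schwarz for $B_h$ then yields the weighted inequality, and taking $h=h_j=\max\{jg,-1\}\searrow -1$ for a fixed $g\in\E_0(\Omega)$ recovers the unweighted mass inequality by monotone convergence. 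With this correction your outline goes through; without it, the central step is unsupported.
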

\begin{Prop}\label{prop max F}
	a) If $u,v\in\F(\Omega)$ then $u+v\in\F(\Omega)$.\\
	b) If $u\in\F(\Omega)$ and $v\in PSH^-(\Omega)$ then $\max\{u, v\}\in\F(\Omega)$.
\end{Prop}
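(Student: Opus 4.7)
The plan is to establish each of (a) and (b) by constructing defining sequences in $\E_0(\Omega)$ whose total Monge--Amp\`ere masses are bounded, using Proposition~\ref{lem mix MA} as the main quantitative tool.

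For (a), first I would choose decreasing sequences $\{u_j\},\{v_j\}\subset\E_0(\Omega)$ with $u_j\searrow u$, $v_j\searrow v$, and both $\sup_j\int_\Omega(dd^cu_j)^n$ and $\sup_j\int_\Omega(dd^cv_j)^n$ finite (these exist by the definition of $\F$). The sum $u_j+v_j$ is a bounded negative plurisubharmonic function tending to $0$ at $\partial\Omega$, so $u_j+v_j\in\E_0(\Omega)$, and evidently $u_j+v_j\searrow u+v$. Expanding $(dd^c(u_j+v_j))^n$ via the Newton binomial formula and estimating each mixed Monge--Amp\`ere integral by Proposition~\ref{lem mix MA} yields the uniform bound
\[
\int_\Omega (dd^c(u_j+v_j))^n\leq \Bigl(\bigl(\textstyle\int_\Omega(dd^cu_j)^n\bigr)^{1/n}+\bigl(\textstyle\int_\Omega(dd^cv_j)^n\bigr)^{1/n}\Bigr)^n,
\]
whence $u+v\in\F(\Omega)$.

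For (b), I would pick $\{u_j\}\subset\E_0(\Omega)$ with $u_j\searrow u$ and $\sup_j\int_\Omega(dd^cu_j)^n<\infty$, and set $w_j:=\max\{u_j,v\}$. One then checks that $w_j\in\E_0(\Omega)$: it is negative and plurisubharmonic, bounded below by $u_j\in L^\infty$, and since $u_j\to 0$ at $\partial\Omega$ while $v\leq 0$, we have $w_j\to 0$ at $\partial\Omega$. Clearly $w_j\searrow\max\{u,v\}$. The essential point is the uniform bound on $\int_\Omega(dd^cw_j)^n$. To prove it, I would fix an auxiliary $\phi\in\E_0(\Omega)$ with $\phi<0$ on $\Omega$ (available from the hyperconvexity of $\Omega$) and for $\epsilon>0$ set $u_j^\epsilon:=u_j+\epsilon\phi\in\E_0(\Omega)$, which is strictly less than $u_j$, and hence strictly less than $w_j$, on $\Omega$. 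The comparison principle for $\E_0$ applied on $\{u_j^\epsilon<w_j\}=\Omega$ gives
\[
\int_\Omega(dd^cw_j)^n\leq\int_\Omega(dd^cu_j^\epsilon)^n,
\]
and expanding the right-hand side binomially and applying Proposition~\ref{lem mix MA} to each mixed term shows that this upper bound tends to $\int_\Omega(dd^cu_j)^n$ as $\epsilon\to 0^+$. Thus $\int_\Omega(dd^cw_j)^n\leq\int_\Omega(dd^cu_j)^n$, which is uniformly bounded, so $\max\{u,v\}\in\F(\Omega)$.

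The hard part is really the mass bound in (b): because $v$ is only assumed to lie in $PSH^-(\Omega)$, the Monge--Amp\`ere measure $(dd^cv)^n$ is not a priori defined, so the naive decomposition of $\int(dd^cw_j)^n$ over $\{u_j>v\}$ and $\{u_j\leq v\}$ has no clean meaning on the second piece. The perturbation-plus-comparison step above sidesteps this difficulty by never requiring any Monge--Amp\`ere product in which $v$ appears directly.
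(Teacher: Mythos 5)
Your proof is correct and follows essentially the route the paper itself indicates: for part (a) the multinomial expansion controlled by Proposition~\ref{lem mix MA} reproduces exactly the estimate of \cite[Lemma 5.4]{Ceg04} that the paper cites, and for part (b) the paper likewise invokes only the definition of $\F$ and the Bedford--Taylor comparison principle, with your $u_j+\epsilon\phi$ perturbation supplying the standard detail needed to apply that principle on all of $\Omega$. No gaps.
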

The part a) of Proposition \ref{prop max F} can be obtained by using 
 \cite[Lemma 5.4]{Ceg04}, Proposition \ref{MA in F} and the definition of the class $\F$.
 The part b) can be obtained by using the definition of the class $\F$ and the Bedford-Taylor
 Comparison Principle \cite{BT82}.
 
 By \cite[Theorem 3.7]{NP09}, we have:
\begin{Prop}\label{lem appr}
	Let $\Omega$ be a hyperconvex domain in $\C^n$ and $u\in PSH^-(\Omega)$. Assume that there are
	$u_j\in\F (\Omega)$, $j\in\N$, such that $u_j$ converges almost everywhere to
	$u$ as $j\rightarrow\infty$. If
	$\sup_{j>0}\int_{\Omega}(dd^cu_j)^n<\infty$ then $u\in\F(\Omega)$.
\end{Prop}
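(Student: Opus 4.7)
The plan is to reduce the problem to the monotone decreasing case, where membership of the limit in $\F(\Omega)$ follows almost directly from the definition combined with a diagonal argument.

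For the reduction, I would form, for each $j,m\in\N$, the finite maximum $\phi_j^m:=\max(u_j,u_{j+1},\dots,u_{j+m})$. Iterating Proposition \ref{prop max F}(b) gives $\phi_j^m\in\F(\Omega)$, and since $\phi_j^m\geq u_j$ pointwise, the comparison-principle monotonicity of total Monge-Amp\`ere mass inside $\F$ supplies the uniform bound $\int_\Omega(dd^c\phi_j^m)^n\leq \int_\Omega(dd^cu_j)^n\leq C$. Setting $v_j:=\bigl(\sup_{k\geq j}u_k\bigr)^*\in PSH^-(\Omega)$, we obtain a decreasing sequence of plurisubharmonic functions; the a.e.\ convergence $u_j\to u$ to a plurisubharmonic function gives $(\limsup_j u_j)^*=u$, so $v_j\searrow u$ pointwise on $\Omega$ (both sides being plurisubharmonic functions agreeing outside a pluripolar set).

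The key technical step, and the main obstacle I anticipate, is to show $v_j\in\F(\Omega)$ with $\int_\Omega(dd^cv_j)^n\leq C$. The natural tool is a monotone convergence principle for the Monge-Amp\`ere operator along the increasing sequence $\phi_j^m\nearrow v_j$: using an auxiliary exhaustion in $\E_0(\Omega)$ (furnished by hyperconvexity) together with Proposition \ref{lem mix MA} to control mixed Monge-Amp\`ere integrals, one should be able to transfer the uniform bound on $\int(dd^c\phi_j^m)^n$ to $v_j$ and to realise $v_j$ as a decreasing limit in $\E_0(\Omega)$ of uniformly bounded total mass. This upgrade --- from bounded masses along an increasing family of $\F$-elements to $\F$-membership of the upper-regularised supremum --- is the delicate point.

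Finally, once $v_j\in\F(\Omega)$ with $v_j\searrow u$ and $\sup_j\int(dd^cv_j)^n<\infty$, a diagonal procedure closes the argument: for each $j$, the definition of $\F$ and Proposition \ref{MA in F} provide $v_j^\ell\in\E_0(\Omega)$ with $v_j^\ell\searrow v_j$ as $\ell\to\infty$ and $\int_\Omega(dd^cv_j^\ell)^n\leq C+1$ for all large $\ell$; extracting a diagonal sequence $w_j:=v_j^{\ell(j)}$, and if necessary replacing $w_j$ by $\max_{k\geq j}w_k$ to restore monotonicity, yields an $\E_0$-sequence decreasing to $u$ with uniformly bounded Monge-Amp\`ere masses, which exhibits $u\in\F(\Omega)$ directly from the definition.
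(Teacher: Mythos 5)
The paper does not actually prove this proposition: it is quoted verbatim from [NP09, Theorem 3.7], so there is no internal proof to compare against. Judged on its own terms, your reduction to the decreasing case is the right (and standard) strategy: $v_j:=(\sup_{k\ge j}u_k)^*$ is a decreasing sequence in $PSH^-(\Omega)$ converging to $u$, and once $v_j\in\F(\Omega)$ with uniformly bounded masses is established, only decreasing limits remain to be handled. However, the step you yourself flag as ``the delicate point'' --- that $v_j\in\F(\Omega)$ with $\int_\Omega(dd^cv_j)^n\le C$ --- is left unproven, and the route you propose for it (a monotone convergence principle along $\phi_j^m\nearrow v_j$ plus mixed Monge--Amp\`ere estimates) is both unnecessary and essentially as hard as the proposition itself: upgrading ``bounded masses along a sequence in $\F$ converging a.e.\ to $v_j$'' to ``$v_j\in\F$'' is precisely an instance of the statement being proved, so as written the argument is circular at its crux. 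Fortunately the gap closes in one line with tools already in the paper: since $v_j\ge u_j$ and $v_j\in PSH^-(\Omega)$, Proposition \ref{prop max F}(b) gives $v_j=\max\{u_j,v_j\}\in\F(\Omega)$, and Proposition \ref{cor NP compa} then gives $\int_\Omega(dd^cv_j)^n\le\int_\Omega(dd^cu_j)^n\le C$. The auxiliary maxima $\phi_j^m$ are not needed at all.

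The concluding diagonal argument is also too loose to stand as written. Choosing $\ell(j)$ only so that $\int_\Omega(dd^cv_j^{\ell(j)})^n\le C+1$ gives no control on how close $v_j^{\ell(j)}$ is to $v_j$, so neither $w_j$ nor $(\sup_{k\ge j}w_k)^*$ need converge to $u$ (take $\ell(j)=1$ for every $j$ to see the problem). You must additionally force convergence, for instance by requiring $\int_{K_j}\bigl(v_j^{\ell(j)}-v_j\bigr)\,dV<2^{-j}$ for an exhaustion $K_j\Subset\Omega$; then $\sum_j\bigl(v_j^{\ell(j)}-v_j\bigr)<\infty$ almost everywhere, hence $\limsup_j v_j^{\ell(j)}=\lim_j v_j=u$ almost everywhere, so the functions $(\sup_{k\ge j}v_k^{\ell(k)})^*$ decrease to a plurisubharmonic function equal to $u$ almost everywhere, hence to $u$, and they belong to $\E_0(\Omega)$ with masses at most $C+1$ by Proposition \ref{cor NP compa}. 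With these two repairs the argument is complete.
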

By \cite[Proposition 3.1]{NP09}, we have:
\begin{Prop}\label{cor NP compa}
	Let $u,v\in\F$ such that $u\leq v$ in $\Omega$. Then
	\begin{center}
	$\int\limits_\Omega(dd^cv)^n\leq\int\limits_\Omega(dd^cu)^n.$	
	\end{center}
\end{Prop}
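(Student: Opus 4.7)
The plan is to combine the Bedford--Taylor comparison principle with a small perturbation that upgrades the hypothesis $u\le v$ to a strict inequality on all of $\Omega$. Without strict inequality the comparison principle only controls the integral over $\{u<v\}$, leaving the coincidence set $\{u=v\}$ to be dealt with; the perturbation avoids this issue and the resulting error is controlled by Proposition \ref{lem mix MA}.

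First I would reduce to the $\E_0$ setting. By the definition of $\F$, pick sequences $\{u_j\},\{v_j\}\subset\E_0$ with $u_j\searrow u$ and $v_j\searrow v$. Set $\tilde v_j:=\max(u_j,v_j)$; by Proposition \ref{prop max F}(b) this lies in $\F$, and boundedness places it in $\E_0$. One checks $\tilde v_j\ge u_j$ on $\Omega$ and $\tilde v_j\searrow\max(u,v)=v$. Proposition \ref{MA in F} then gives $\int_\Omega(dd^cu_j)^n\to\int_\Omega(dd^cu)^n$ and $\int_\Omega(dd^c\tilde v_j)^n\to\int_\Omega(dd^cv)^n$, so it suffices to compare the bounded-level masses.

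Next, fix $\rho\in\E_0$ with $\rho<0$ on $\Omega$ (which exists because $\Omega$ is hyperconvex) and $\epsilon>0$. Then $u_j+\epsilon\rho\in\E_0$ and $u_j+\epsilon\rho<u_j\le\tilde v_j$ strictly on $\Omega$, while both functions vanish at $\partial\Omega$. The Bedford--Taylor comparison principle \cite{BT82}, applied on $\{u_j+\epsilon\rho<\tilde v_j\}=\Omega$, yields
\[
\int_\Omega(dd^c\tilde v_j)^n\le\int_\Omega\bigl(dd^c(u_j+\epsilon\rho)\bigr)^n.
\]
Expanding the right-hand side by multilinearity and bounding each mixed term via Proposition \ref{lem mix MA} produces
\[
\int_\Omega\bigl(dd^c(u_j+\epsilon\rho)\bigr)^n\le\left[\left(\int_\Omega(dd^cu_j)^n\right)^{1/n}+\epsilon\left(\int_\Omega(dd^c\rho)^n\right)^{1/n}\right]^n.
\]

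Letting $j\to\infty$ and then $\epsilon\to 0$ gives $\int_\Omega(dd^cv)^n\le\int_\Omega(dd^cu)^n$, as desired. The main technical point is the perturbation by $\epsilon\rho$: it cleanly converts $u\le v$ into the strict inequality $u_j+\epsilon\rho<\tilde v_j$ on all of $\Omega$, while adding an error that vanishes as $\epsilon\to 0$ thanks to the multilinear mixed-mass inequality of Proposition \ref{lem mix MA}. A secondary care point is the replacement of $v_j$ by $\tilde v_j$, which is needed to ensure $\tilde v_j\ge u_j$ without destroying the decreasing property.
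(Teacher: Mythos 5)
Your argument is correct, but it is a genuinely different route from the paper's: the paper does not prove this proposition at all, it simply quotes it as a special case of the comparison principle of Nguyen--Pham (\cite[Proposition 3.1]{NP09}), which is a general comparison theorem in Cegrell's classes. You instead give a self-contained derivation from the classical Bedford--Taylor comparison principle for bounded functions. The three ingredients you use are all sound: (1) replacing $v_j$ by $\tilde v_j=\max(u_j,v_j)$ keeps the sequence decreasing, forces $\tilde v_j\ge u_j$, and still converges to $v$ since $u\le v$, while Proposition \ref{prop max F}(b) together with boundedness and the boundary behaviour of $u_j$ puts $\tilde v_j$ in $\E_0$ so that Proposition \ref{MA in F} applies on both sides; (2) the perturbation $u_j+\epsilon\rho$ with $\rho\in\E_0$, $\rho<0$ (which exists on a connected hyperconvex domain, e.g.\ a relative extremal function of a compact ball) makes the inequality strict everywhere, so the Bedford--Taylor inequality over $\{u_j+\epsilon\rho<\tilde v_j\}$ is an inequality over all of $\Omega$ and the coincidence set causes no trouble; (3) the error $(dd^c(u_j+\epsilon\rho))^n$ is controlled by the multilinear expansion and Proposition \ref{lem mix MA}, giving the bound $\bigl(A_j^{1/n}+\epsilon B^{1/n}\bigr)^n$ which collapses to $\int_\Omega(dd^cu)^n$ after $j\to\infty$ and $\epsilon\to0$. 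What your approach buys is independence from \cite{NP09} at the cost of a longer argument; what the paper's citation buys is brevity and access to the stronger statement (comparison of mixed masses for general $\F$ functions) proved there. Either is acceptable; if you keep yours, state explicitly the version of the Bedford--Taylor comparison principle you invoke (the hypothesis $\liminf_{z\to\partial\Omega}(u-v)\ge 0$ and the conclusion on the open set $\{u<v\}$), since that is the one step whose precise form matters.
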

\subsection{Plurisubharmonic envelopes}
Let $D \Subset \C^n$ be a bounded domain. If $u:D\rightarrow\R$ 
is a  bounded function then the plurisubharmonic envelope $P_{D}(u)$ of $u$ 
 in $D$ is defined by
 \begin{center}
 	$P_D(u)=(\sup\{v\in PSH(\Omega): v\leq u \})^*$,
 \end{center}
where $(\sup\limits_{v\in S}v(z))^*$ is the upper envelope of $\sup\limits_{v\in S}v(z)$.
\begin{Lem}\label{lem GLZ decrease seq}
	a) Let $u:D\rightarrow\R$ be a bounded function. Then $P_D(u)\leq u$ quasi everywhere, i.e., the set
	$\{z\in D: P_D(u)(z)>u(z) \}$ is pluripolar. Moreover,
	\begin{center}
		$P_D(u)=\sup \{v\in PSH(D): v\leq u$ quasi everywhere on  $ D \}.$
	\end{center}
b)	Let $u_j, u:D\rightarrow\R$ be bounded functions such that $u_j\searrow u$ as $j\rightarrow\infty$.
Then $P_D(u_j)$ decreases to $P_{D}(u)$.
\end{Lem}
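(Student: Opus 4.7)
\medskip\noindent\textbf{Proof proposal for Lemma \ref{lem GLZ decrease seq}.}

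For part~a), the plan is to first dispatch the quasi-everywhere inequality by the standard Choquet lemma plus Bedford--Taylor negligibility argument. Let $\mathcal{F}=\{v\in PSH(D):v\le u\}$ and $\psi=\sup_{v\in\mathcal{F}}v$. By Choquet's lemma there is a countable subfamily of $\mathcal{F}$ whose supremum has the same upper regularization as $\psi$, and the Bedford--Taylor negligibility theorem implies that $\{\psi^*>\psi\}$ is pluripolar. Since every $v\in\mathcal{F}$ satisfies $v\le u$ pointwise, we have $\psi\le u$ on all of $D$, whence $P_D(u)=\psi^*\le u$ outside a pluripolar set.

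For the envelope formula in~a), let $Q=\sup\{v\in PSH(D):v\le u\text{ q.e. on }D\}$. By the first part, $P_D(u)$ itself is an admissible competitor, so $P_D(u)\le Q$. For the reverse, take $v\in PSH(D)$ with $v\le u$ off a pluripolar set $E\subset D$. Choose $\varphi\in PSH^-(D)$ with $\varphi\equiv-\infty$ on $E$ (this is exactly what pluripolarity in $D$ gives, or one can restrict a Josefson potential from $\C^n$). For each $\varepsilon>0$ the function $v+\varepsilon\varphi$ is plurisubharmonic, bounded above by $-\infty$ on $E$ and by $v\le u$ on $D\setminus E$, so $v+\varepsilon\varphi\le u$ everywhere and therefore $v+\varepsilon\varphi\le P_D(u)$. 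Letting $\varepsilon\downarrow 0$ gives $v\le P_D(u)$ on $\{\varphi>-\infty\}$, hence on a set of full Lebesgue measure; since $v$ and $P_D(u)$ are both plurisubharmonic (and in particular upper semicontinuous with the sub-mean property), a standard Lebesgue-a.e.\ comparison promotes this to $v\le P_D(u)$ everywhere. Taking the supremum in $v$ yields $Q\le P_D(u)$.

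For part~b), note first that monotonicity of the envelope in the obstacle gives $P_D(u_j)\ge P_D(u_{j+1})\ge P_D(u)$, so the sequence $\{P_D(u_j)\}$ decreases pointwise to some function $\psi\ge P_D(u)$; since it is a decreasing limit of PSH functions uniformly bounded below (all $u_j$ are bounded), $\psi\in PSH(D)$. By part~a), for every $j$ there is a pluripolar set $E_j$ with $P_D(u_j)\le u_j$ on $D\setminus E_j$. Setting $E=\bigcup_j E_j$, which is still pluripolar, we have $\psi(z)=\lim_j P_D(u_j)(z)\le\lim_j u_j(z)=u(z)$ for every $z\in D\setminus E$, i.e.\ $\psi\le u$ quasi everywhere. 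Part~a) then yields $\psi\le P_D(u)$, giving the desired equality $\psi=P_D(u)$.

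The main obstacle I anticipate is the q.e.-to-everywhere promotion in part~a), which must use both the existence of a PSH potential singular on the exceptional pluripolar set and the fact that a Lebesgue-a.e.\ inequality between PSH functions is pointwise; everything else is essentially a bookkeeping exercise combining Choquet's lemma, Bedford--Taylor negligibility, and the monotone convergence structure of envelopes.
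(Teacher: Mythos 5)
Your proof is correct and follows essentially the same route as the paper, which does not spell out an argument but simply notes that the proof is that of parts 1), 2) of \cite[Proposition 2.2]{GLZ19}: Choquet's lemma plus Bedford--Taylor negligibility for the quasi-everywhere inequality, the $v+\varepsilon\varphi$ perturbation by a potential of the exceptional pluripolar set for the envelope formula, and the monotone-limit argument combined with part a) for part b). No gaps to report.
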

The proof of Lemma \ref{lem GLZ decrease seq} is the same as the proof of the parts 1), 2) of
\cite[Proposition 2.2]{GLZ19}.
For every domain $W\Subset D$, we also consider
\begin{center}
	$P_{\overline{W}} (u) := (\sup\{v \in \mathrm{PSH}(W)  : \hat{v} \leq u\ \textrm{ on } \overline{W}\})^*,$
\end{center}
where $\hat{v}$ is the upper semicontinuous extension of $v$ to $\overline{W}$ defined by  
\begin{center}
	$\hat{v}(\xi):= \lim_{r\to 0^+} \sup_{B(\xi,r)\cap W} v, \ \forall\xi \in \partial W.$
\end{center}

  The following results are also proved in \cite{GLZ19}:
  \begin{Lem}
  	\label{lem GLZ exhaustion}\cite[Lemma 3.11]{GLZ19}
  	Let $(D_j)$ be an increasing sequence of relatively compact domains in $D$ such that $\cup D_j=D$. Assume that $u$ is a bounded lower semicontinuous function in $D$. Then $P_{\overline{D_j}}(u)$ decreases to $P_{D}(u)$. 
  \end{Lem}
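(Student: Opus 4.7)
The plan is to verify the two inequalities $P_{\overline{D_j}}(u) \ge P_D(u)$ and $\lim_j P_{\overline{D_j}}(u) \le P_D(u)$ on $D$, after first confirming that the sequence is monotone decreasing so the limit exists. Both directions rely on the same restriction principle, combined with Lemma~\ref{lem GLZ decrease seq}~a) in the final step.

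For the monotonicity and for the lower bound, I would take any $v \in \mathrm{PSH}(D_{j+1})$ (respectively $v \in \mathrm{PSH}(D)$) with $\hat v \le u$ on $\overline{D_{j+1}}$ (respectively $v \le u$ on $D$) and restrict it to $D_j$. The restriction is plurisubharmonic and bounded above by $u$ on $D_j$, so only the boundary inequality on $\partial D_j$ needs checking; but $\partial D_j$ lies inside the larger open set on which $v$ is already defined and upper semicontinuous, so at every $\xi \in \partial D_j$,
\[
\widehat{v|_{D_j}}(\xi) = \limsup_{z \to \xi,\, z \in D_j} v(z) \le v(\xi) \le u(\xi).
\]
Hence $v|_{D_j}$ is admissible in the definition of $P_{\overline{D_j}}(u)$; taking the upper envelope and the usc regularization gives $P_{\overline{D_{j+1}}}(u) \le P_{\overline{D_j}}(u)$ on $D_j$ and, with the analogous choice of $v$, $P_D(u) \le P_{\overline{D_j}}(u)$ on $D_j$. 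Setting $\phi := \lim_j P_{\overline{D_j}}(u)$ we therefore obtain $\phi \ge P_D(u)$.

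For the reverse inequality $\phi \le P_D(u)$, I would verify that $\phi$ is itself an admissible competitor for $P_D(u)$ in the ``quasi everywhere'' form of Lemma~\ref{lem GLZ decrease seq}~a). On each relatively compact $D_{j_0} \Subset D$, $\phi$ is a decreasing limit of plurisubharmonic functions bounded uniformly below (constants are admissible since $u$ is bounded), hence $\phi \in \mathrm{PSH}(D)$. Each $P_{\overline{D_j}}(u)$ agrees with its unregularized supremum outside a pluripolar set, so $P_{\overline{D_j}}(u) \le u$ quasi everywhere on $D_j$; a countable union of pluripolar sets being pluripolar, $\phi \le u$ quasi everywhere on $D$. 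Lemma~\ref{lem GLZ decrease seq}~a) then yields $\phi \le P_D(u)$, closing the argument.

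The main obstacle is conceptual rather than computational: one must carefully juggle the two notions of envelope (with and without the boundary condition) and confirm that restrictions of admissible test functions from $D_{j+1}$ or from $D$ remain admissible on $D_j$, which hinges on nothing more than the upper semicontinuity of plurisubharmonic functions. The lower semicontinuity hypothesis on $u$ does not appear explicitly in this outline; I would expect it to be needed only to strengthen ``quasi everywhere'' to ``everywhere'' or to control the boundary behaviour of $P_{\overline{D_j}}(u)$ near $\partial D_j$, neither of which is claimed in the statement itself.
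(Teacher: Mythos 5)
The paper does not actually prove this lemma: it is quoted directly from \cite[Lemma 3.11]{GLZ19}, so there is no in-paper argument to compare yours against. Judged on its own, your proof is correct and is the natural one. The two restriction arguments (for monotonicity and for $P_D(u)\le P_{\overline{D_j}}(u)$ on $D_j$) are sound, and the reverse inequality is closed correctly by showing that the limit $\phi$ is plurisubharmonic, bounded, and $\le u$ quasi everywhere (negligible sets of upper envelopes of locally bounded families are pluripolar, and countable unions of pluripolar sets are pluripolar), so that the quasi-everywhere characterization in Lemma \ref{lem GLZ decrease seq}~a) applies. One small imprecision: the hypothesis only gives $D_j\subset D_{j+1}$ with each $D_j\Subset D$, so $\partial D_j$ need not lie inside $D_{j+1}$, and for $\xi\in\partial D_j\cap\partial D_{j+1}$ the bound $\widehat{v|_{D_j}}(\xi)\le v(\xi)$ is meaningless since $v(\xi)$ is undefined; instead use $\widehat{v|_{D_j}}(\xi)\le \widehat{v}(\xi)\le u(\xi)$, comparing the $\limsup$ over $D_j$ with the $\limsup$ over the larger set $D_{j+1}$. (For competitors defined on all of $D$ the issue does not arise, since $\overline{D_j}\subset D$.) Your closing observation is also accurate: lower semicontinuity of $u$ plays no role in this argument once Lemma \ref{lem GLZ decrease seq}~a) is available for merely bounded $u$, as it is stated in the paper; the hypothesis is inherited from the context of \cite{GLZ19}, where these envelopes are applied to lower semicontinuous supersolutions and where the companion Lemma \ref{lem: increasing sequence} genuinely needs approximation by continuous functions.
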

 \begin{Lem}\label{lem: increasing sequence}\cite[Lemma 3.10]{GLZ19}
 	Let $(u_j)$ be an increasing sequence of continuous functions on $D$ which converges pointwise to a bounded function $u$. Let $W$ be a relatively compact  domain in $D$. Then $P_{W}(u_j)$ increases almost everywhere to $P_{\overline{W}}(u)$.  
 \end{Lem}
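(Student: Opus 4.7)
The plan is to compare the pointwise (increasing) limit $\phi := \lim_j P_W(u_j)$ with $P_{\overline{W}}(u)$ via a two-sided inequality between their upper semicontinuous regularizations, and then to invoke the standard fact that an increasing sequence of plurisubharmonic functions locally bounded above coincides with its usc regularization almost everywhere. Thus it is enough to prove $\phi^* = P_{\overline{W}}(u)$ on $W$, which I would attack by the two inequalities $\phi^* \leq P_{\overline{W}}(u)$ and $P_{\overline{W}}(u) \leq \phi^*$ in turn.

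For the easier direction $\phi^* \leq P_{\overline{W}}(u)$, the continuity of $u_j$ upgrades the quasi-everywhere bound of Lemma \ref{lem GLZ decrease seq}(a) to the pointwise bound $P_W(u_j) \leq u_j$ on $W$; then for $\xi \in \partial W$,
\[
\widehat{P_W(u_j)}(\xi) \;=\; \lim_{r \to 0^+} \sup_{B(\xi,r)\cap W} P_W(u_j) \;\leq\; \lim_{r \to 0^+} \sup_{B(\xi,r)} u_j \;=\; u_j(\xi) \;\leq\; u(\xi).
\]
Hence $P_W(u_j)$ is an admissible competitor in the supremum defining $P_{\overline{W}}(u)$, so $P_W(u_j) \leq P_{\overline{W}}(u)$, and passing to the supremum in $j$ together with usc regularization gives $\phi^* \leq P_{\overline{W}}(u)$.

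The main obstacle is the reverse inequality $P_{\overline{W}}(u) \leq \phi^*$: since $u$ is only lower semicontinuous (as an increasing limit of continuous functions) and any admissible $\hat{v}$ is only upper semicontinuous, no direct uniform comparison of $\hat{v}$ with $u_j$ on $\overline{W}$ is available from Dini's theorem. My plan is to replace uniformity by a compactness argument. Fix $v \in \mathrm{PSH}(W)$ with $\hat{v} \leq u$ on $\overline{W}$ and $\epsilon > 0$, and consider
\[
E_j := \{ z \in \overline{W} : \hat{v}(z) \geq u_j(z) + \epsilon \}.
\]
Each $E_j$ is closed in $\overline{W}$ (difference of an usc and a continuous function is usc), the sets decrease in $j$, and their intersection is empty: a point $z \in \bigcap_j E_j$ would satisfy $\hat{v}(z) \geq u(z) + \epsilon$, contradicting $\hat{v} \leq u$ on $\overline{W}$ since $u$ is bounded. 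Compactness of $\overline{W}$ then forces $E_{j_0} = \emptyset$ for some $j_0$, which means $v(z) - \epsilon \leq u_{j_0}(z)$ for every $z \in W$ (using $\hat{v} = v$ on $W$). Therefore $v - \epsilon$ is an admissible competitor for $P_W(u_{j_0})$, so $v - \epsilon \leq P_W(u_{j_0}) \leq \phi$; letting $\epsilon \to 0$ yields $v \leq \phi$, and supping over all admissible $v$ followed by usc regularization gives $P_{\overline{W}}(u) \leq \phi^*$. Combining with the easy direction gives $\phi^* = P_{\overline{W}}(u)$, and since $P_W(u_j)$ is an increasing sequence of plurisubharmonic functions locally bounded above, $\phi = \phi^*$ almost everywhere, completing the proof.
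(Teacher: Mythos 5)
The paper does not actually prove this lemma: it is quoted from \cite[Lemma 3.10]{GLZ19}, so there is no in-paper argument to compare against. Your proof is correct and is essentially the standard Dini-type argument for this statement (continuity of $u_j$ makes $P_W(u_j)$ an admissible competitor for $P_{\overline{W}}(u)$ in one direction, and the compactness/finite-intersection argument on the closed sets $E_j$ handles the other); the only step worth an explicit line is that $\hat v$ is upper semicontinuous on all of $\overline{W}$, which is what makes each $E_j$ compact, and the final passage from $\phi^*=P_{\overline{W}}(u)$ to the almost-everywhere statement correctly rests on the Bedford--Taylor theorem that negligible sets are pluripolar, hence Lebesgue-null.
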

\begin{Prop} \label{Prop GLZ supersolloc}\cite[Theorem 3.9]{GLZ19}
	Let  $D \Subset \C^n$ be a bounded pseudoconvex domain. Assume that a bounded lower semi-continuous function $u$ is a viscosity supersolution (see \cite{EGZ} for the definition) of the equation
	\begin{equation}
	\label{eq: viscosity supersolution local 1}
	(dd^c u)^n =  fdV,
	\end{equation}
	in $D$. Then $P_{D}(u)$ is a pluripotential supersolution of \eqref{eq: viscosity supersolution local 1} in $D$. 
\end{Prop}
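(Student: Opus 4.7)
The goal is to prove $(dd^c P_D(u))^n \le f\,dV$ as pluripotential Monge--Amp\`ere measures on $D$. Since $u$ is bounded, $P_D(u)$ is a bounded plurisubharmonic function whose Monge--Amp\`ere measure is well defined by Bedford--Taylor theory. The plan is a standard regularise-and-pass-to-the-limit scheme: first establish the inequality for a sufficiently regular $u$ via a contact-set identification of the Monge--Amp\`ere measure of the envelope, then recover the general lower semi-continuous case using the monotone convergence Lemmas \ref{lem GLZ exhaustion} and \ref{lem: increasing sequence} together with Bedford--Taylor continuity of the Monge--Amp\`ere operator along monotone sequences of bounded plurisubharmonic functions.

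For the regularisation, one replaces $u$ by its inf-convolution $u_\varepsilon(z) := \inf_{\zeta}\{u(\zeta)+\tfrac{1}{2\varepsilon}|z-\zeta|^2\}$ on an interior subdomain. Standard viscosity theory shows that $u_\varepsilon$ is bounded, locally Lipschitz, semi-concave (hence twice differentiable almost everywhere by Alexandrov's theorem), increases pointwise to $u$ as $\varepsilon\downarrow 0$, and remains a viscosity supersolution of $(dd^c v)^n = f_\varepsilon\,dV$ for an appropriate $f_\varepsilon$ that is dominated by $f$ and converges to $f$ in a suitable weak sense. Lemma \ref{lem: increasing sequence} then identifies $P_W(u_\varepsilon)$ (up to a null set) as the almost everywhere increasing limit of $P_W(\tilde u_{\varepsilon,j})$ for smooth $\tilde u_{\varepsilon,j} \nearrow u_\varepsilon$, and Lemma \ref{lem GLZ exhaustion} identifies $P_D(u)$ as the decreasing limit of $P_{\overline{D_j}}(u)$ along an exhaustion $D_j \Subset D$. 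These two steps reduce the problem to the semi-concave case.

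For the semi-concave case, $P_D(u_\varepsilon)$ is continuous plurisubharmonic, and the classical balayage principle asserts that $(dd^c P_D(u_\varepsilon))^n$ is concentrated on the contact set $\{P_D(u_\varepsilon)=u_\varepsilon\}$. At a point $x_0$ of this contact set where $P_D(u_\varepsilon)$ is twice differentiable in the Alexandrov sense, the function $u_\varepsilon - P_D(u_\varepsilon) \ge 0$ attains its minimum $0$ at $x_0$, so $P_D(u_\varepsilon)$ is an admissible test function for the viscosity supersolution property of $u_\varepsilon$; since $dd^c P_D(u_\varepsilon)(x_0) \ge 0$, one deduces $(dd^c P_D(u_\varepsilon))^n(x_0) \le f_\varepsilon(x_0)\,dV$ at almost every such point, which translates into the measure inequality $(dd^c P_D(u_\varepsilon))^n \le f_\varepsilon\,dV$ globally on $D$. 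Passing to the limit $\varepsilon\downarrow 0$ with Bedford--Taylor continuity of Monge--Amp\`ere along the monotone family $P_D(u_\varepsilon)$ then yields $(dd^c P_D(u))^n \le f\,dV$.

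The main obstacle is this last core step: converting pointwise viscosity information at contact points into a measure-theoretic density bound for the envelope's Monge--Amp\`ere measure. The delicate part is not the twice-differentiability of $P_D(u_\varepsilon)$ itself (which comes from Alexandrov), but rather ensuring that contact points where $P_D(u_\varepsilon)$ is twice differentiable carry the full mass of $(dd^c P_D(u_\varepsilon))^n$; a Jensen-type argument supplying enough $C^2$ test functions at density points of the contact set is the standard way to handle this. A secondary technical point is to choose the regularisation $u_\varepsilon$ so that the perturbed right-hand side $f_\varepsilon$ both inherits the viscosity supersolution property and passes to the limit correctly as $\varepsilon\downarrow 0$.
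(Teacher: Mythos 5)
This proposition is imported verbatim from \cite[Theorem 3.9]{GLZ19}; the paper contains no proof of it, so there is no internal argument to compare yours against, and I can only measure your outline against the original proof in \cite{GLZ19}. At the level of strategy you have reconstructed that proof reasonably faithfully: regularize $u$ by inf-convolution into semi-concave viscosity supersolutions $u_\varepsilon$ of perturbed equations, use concentration of $(dd^cP_D(u_\varepsilon))^n$ on the contact set, exploit second-order differentiability there, and pass to the limit using the monotonicity Lemmas \ref{lem GLZ exhaustion} and \ref{lem: increasing sequence} together with Bedford--Taylor continuity along monotone sequences. The limiting steps are sound: since $f_\varepsilon\le f$, the uniform bound $(dd^cP(u_\varepsilon))^n\le f\,dV$ survives weak convergence of the Monge--Amp\`ere measures.

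There is, however, a genuine gap at the core step, which you partly concede in your final paragraph. First, $P_D(u_\varepsilon)$ is not, as stated, ``an admissible test function for the viscosity supersolution property'': viscosity test functions must be $C^2$ near the touching point, and a bounded plurisubharmonic function has no pointwise second-order expansion in general --- Alexandrov's theorem applies to the semi-concave $u_\varepsilon$, not to its envelope. That $P_D(u_\varepsilon)$ \emph{does} admit a second-order expansion at almost every contact point is itself a nontrivial claim; it is obtained in \cite{GLZ19} by showing that at contact points the envelope is squeezed between a plurisubharmonic minorant and the semi-concave obstacle, which forces two-sided $C^{1,1}$-type bounds there. Second, and more seriously, even granting pointwise Hessian inequalities almost everywhere on the contact set, one must still identify the Bedford--Taylor measure $(dd^cP_D(u_\varepsilon))^n$ restricted to that set with (or dominate it by) the absolutely continuous measure given by the determinant of the Alexandrov Hessian; for a general bounded plurisubharmonic function the Monge--Amp\`ere measure may charge sets on which the pointwise Hessian determinant is small, so this identification is precisely where the work lies. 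Your sketch names this as ``the main obstacle'' and gestures at ``a Jensen-type argument'' without carrying it out, so the proposal identifies the correct route --- essentially that of \cite{GLZ19} --- but does not close the proof.
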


\section{Proof of the main result}
We first prove Theorem \ref{main} for the case $f=0$.
\begin{Prop}\label{prop f=0} Let $\Omega\subset\C^n$ be a bounded hyperconvex domain and $(X, d, \mu)$ be a totally bounded metric probability space. 
	Let $u: \Omega\times X\rightarrow [-\infty, 0]$ such that
	\begin{itemize}
		\item [(i)] For every $a\in X$, $u(\cdot, a)\in\F (\Omega)$ and
		\begin{center}
			$\int\limits_{\Omega}(dd^cu(z, a))^n\leq (M(a))^n$,
		\end{center}
	where $M\in L^1(X)$ is given.
	\item[(ii)] For every $z\in\Omega$, the function $u(z, \cdot)$ is upper semicontinuous in $X$.
	\end{itemize}
Then  $\tilde{u}(z):=\int\limits_Xu(z, a)d\mu(a)\in\F (\Omega)$. Moreover
\begin{center}
	$\int\limits_{\Omega}(dd^c\tilde{u})^n\leq (\int\limits_X M(a)d\mu(a))^n$.
\end{center}
\end{Prop}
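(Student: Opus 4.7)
The strategy is to approximate $\tilde u$ by integrals with respect to finitely-supported discrete measures and then invoke Proposition \ref{lem appr}. Using total boundedness of $(X,d)$, for each $k\in\N$ I would fix a finite Borel partition $\{X_j^{(k)}\}_{j=1}^{N_k}$ of $X$ with $\operatorname{diam}(X_j^{(k)})<1/k$. Since $M\in L^1(X,\mu)$, inside each cell $X_j^{(k)}$ of positive measure the set of $a$ with $M(a)\le\bar M_j^{(k)}:=\mu(X_j^{(k)})^{-1}\int_{X_j^{(k)}}M\,d\mu$ has positive $\mu$-measure (otherwise $M$ would strictly exceed its own mean on a full-measure subset), so I may pick a representative $a_j^{(k)}$ in this set. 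Set
$$\tilde u_k(z):=\sum_{j}\mu(X_j^{(k)})\,u(z,a_j^{(k)}).$$

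By Proposition \ref{prop max F}(a), the class $\F$ is closed under positive linear combinations, hence $\tilde u_k\in\F$. Expanding $(dd^c\tilde u_k)^n$ by the multinomial theorem and applying Proposition \ref{lem mix MA} to each mixed Monge--Amp\`ere term yields
$$\int_\Omega(dd^c\tilde u_k)^n\le\Bigl(\sum_{j}\mu(X_j^{(k)})M(a_j^{(k)})\Bigr)^{n}\le\Bigl(\int_X M\,d\mu\Bigr)^{n},$$
so the approximants have uniformly bounded mass. Writing $\tilde u_k(z)=\int_X u(z,\sigma_k(a))\,d\mu(a)$ where $d(\sigma_k(a),a)<1/k$, hypothesis (ii) and the reverse Fatou lemma (valid since $u\le 0$) give $\limsup_k\tilde u_k(z)\le\tilde u(z)$ pointwise. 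A parallel reverse Fatou argument in the $z$-variable, using that each $u(\cdot,a)$ is u.s.c., together with Fubini for the subaveraging property, shows $\tilde u\in PSH^-(\Omega)$ (non-triviality of $\tilde u$ coming from the standard estimate $\int_\Omega(-u(\cdot,a))\,dV\lesssim M(a)$ valid in $\F$).

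To extract a sequence that actually reaches $\tilde u$, I would then form the decreasing envelope $v_k:=\bigl(\sup_{l\ge k}\tilde u_l\bigr)^{*}$. Since $v_k\ge\tilde u_k\in\F$ and $v_k\in PSH^-$, Proposition \ref{prop max F}(b) gives $v_k=\max(v_k,\tilde u_k)\in\F$, and Proposition \ref{cor NP compa} bounds $\int(dd^cv_k)^n$ by $(\int_X M\,d\mu)^n$. The decreasing limit $v_\infty\in PSH^-$ then lies in $\F$ by Proposition \ref{lem appr}, and the desired mass bound on $\tilde u$ will pass through Proposition \ref{MA in F} once $v_\infty$ is identified with $\tilde u$.

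\textbf{Main obstacle.}  The crux is the identification $v_\infty=\tilde u$ a.e. Upper semicontinuity of $a\mapsto u(z,a)$ only gives one-sided control $\limsup_k\tilde u_k\le\tilde u$, while for the reverse inequality $v_\infty\ge\tilde u$ one must exploit hypothesis (ii) more delicately---presumably by a diagonal refinement of both the partitions $\{X_j^{(k)}\}$ and the representatives $\{a_j^{(k)}\}$, ensuring that for $\mu$-a.e. $a$ the chosen $\sigma_k(a)$ is close enough to a value where $u(z,\cdot)$ is nearly continuous from below, so that the accumulation points of $\tilde u_k(z)$ reach $\tilde u(z)$ from both sides.
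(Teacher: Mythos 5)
Your mass estimate for the discrete approximants is correct and matches the paper's computation, but the step you yourself flag as the ``main obstacle'' is a genuine gap, not a technicality, and the proposed fix (a diagonal refinement of partitions and representatives) cannot work as stated. With point evaluations $\tilde u_k(z)=\sum_j\mu(X_j^{(k)})\,u(z,a_j^{(k)})$, upper semicontinuity of $a\mapsto u(z,a)$ only yields the one-sided bound $\limsup_k\tilde u_k(z)\le\tilde u(z)$; the reverse inequality would require lower semicontinuity in $a$, which is not assumed. Any Lusin/Egorov-type selection of representatives achieving $u(z,\sigma_k(a))\to u(z,a)$ for $\mu$-a.e.\ $a$ can only be arranged for one fixed $z$ (or countably many) at a time, whereas the functions $\tilde u_k$ must be defined by a single choice of representatives valid for every $z\in\Omega$ simultaneously. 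Consequently your $v_\infty=\lim_k(\sup_{l\ge k}\tilde u_l)^*$ satisfies $v_\infty\le\tilde u$ a.e.\ but nothing forces $v_\infty\ge\tilde u$, so Proposition \ref{lem appr} cannot be applied to $\tilde u$ and the argument does not close.

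The device that removes the obstacle is to replace the point evaluation on each cell by the supremum over the cell. Take \emph{nested} partitions $\{U_{j,k}\}$ of diameter at most $2^{-j}$ and set $u_j(z)=\sum_k\mu(U_{j,k})\sup_{a\in U_{j,k}}u(z,a)$, $\tilde u_j=(u_j)^*$. Then $u_j\ge\tilde u$ is automatic (each cell supremum dominates $u(z,a)$ for every $a$ in that cell), nestedness makes $(u_j)$ decreasing, and upper semicontinuity in $a$ together with Fatou's lemma gives $\lim_j u_j\le\tilde u$, so $\tilde u_j\searrow\tilde u$. The mass bound survives because $u_j$ dominates $\sum_k\mu(U_{j,k})u(\cdot,a_{j,k})$ for \emph{arbitrary} representatives $a_{j,k}\in U_{j,k}$, so Proposition \ref{cor NP compa} transfers your multinomial estimate (via Proposition \ref{lem mix MA}) to $\tilde u_j$, and letting the representatives vary replaces $M(a_{j,k})$ by $\inf_{U_{j,k}}M$, giving $\int_\Omega(dd^c\tilde u_j)^n\le\bigl(\int_XM\,d\mu\bigr)^n$. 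With this modification the rest of your outline (Proposition \ref{prop max F}, Proposition \ref{lem appr}, and the final mass inequality from monotonicity) goes through; your extra care in choosing $a_j^{(k)}$ below the cell average of $M$ becomes unnecessary.
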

\begin{proof}
	We will show that there exists a sequence of functions $\tilde{u}_j\in\F(\Omega)$ such that
	 $\tilde{u}_j\searrow\tilde{u}$ as $j\rightarrow\infty$ 
	\begin{center}
		$\sup\limits_{j\in\Z^+}\int\limits_{\Omega}(dd^c\tilde{u}_j)^n\leq M(a),$
	\end{center}
for every $a\in X$.

	Since $X$ is totally bounded, there exists a finite cover $\{X_k\}_{k=1}^{m_1}$ of $X$ 
	such that the diameter of each $X_k$ is at most $1/2$. Denote
	\begin{center}
$U_{1,1}=X_1, U_{1, 2}=X_2\setminus X_1,...,
 U_{1, m_1}=X_{m_1}\setminus(\cup_{l=1}^{m_1-1}X_l).$
	\end{center}
	Then $\{U_{1, k}\}_{k=1}^{m_1}$ is a finite cover of $X$ such that
	\begin{itemize}
		\item $U_{1, k}\cap U_{1, l}=\emptyset$ if $k\neq l$;
		\item $diam(U_{1, k})\leq 1/2$ for all $k=1,...,m_1$;
		\item $U_{1, k}$ is totally bounded for all $k=1,...,m_1.$
	\end{itemize}
	 By using induction, for every $j\in\Z^+$, we can find a finite cover $\{U_{j, k}\}_{k=1}^{m_j}$ of $X$
	 such that
	 \begin{itemize}
	 	\item For every $1\leq k\leq m_{j+1}$, there exists $1\leq l\leq m_j$ such that
	 	$U_{j+1, k}\subset U_{j, l}$;
	 	\item $U_{j, k}\cap U_{j, l}=\emptyset$ if $k\neq l$;
	 	\item $diam(U_{j, k})\leq 2^{-j}$ for all $k=1,...,m_1$.
	 \end{itemize}
	 
	 For every $j\in \Z^+$, we define
	 \begin{center}
	 	$u_j(z)=\sum\limits_{k=1}^{m_j}\mu (U_{j, k})\sup\limits_{a\in U_{j,k}}u(z, a)\qquad$ and $\qquad\tilde{u}_j=(u_j)^*$.
	 \end{center}
 Then $\tilde{u}_j\in\F (\Omega)$. 
 Let $a_{j,k}$ be an arbitrary element of $U_{j,k}$ for $j\in\Z^+$ and $k=1,..., m_j$.
  By using Proposition \ref{cor NP compa} for 
 $\tilde{u}_j$ and $\sum\limits_{k=1}^{m_j}\mu (U_{j, k})u(z, a_{j, k})$
 and by applying Proposition \ref{lem mix MA}, we have
 \begin{flushleft}
 	$\int\limits_{\Omega}(dd^c\tilde{u}_j)^n 
 	\leq \int\limits_{\Omega}(dd^c(\sum\limits_{k=1}^{m_j}\mu (U_{j, k})u(z, a_{j, k})))^n$\\
 	$=\sum\limits_{k_1+...+k_{m_j}=n}
 	\dfrac{n!}{k_1!...k_{m_j}!}\left(\prod\limits_{l=1}^{m_j}\mu (U_{j, l})^{k_l}\right)
 	\int\limits_{\Omega}(dd^cu (z, a_{j, 1}))^{k_1}\wedge...\wedge (dd^cu(z, a_{j, m_j}))^{k_{m_j}}$\\
 	$\leq \sum\limits_{k_1+...+k_{m_j}=n}
 	\dfrac{n!}{k_1!...k_{m_j}!}\left(\prod_{l=1}^{m_j}\mu (U_{j, l})^{k_l}\right)
 	\prod_{l=1}^{m_j}(\int\limits_{\Omega}(dd^cu (z, a_{j, l}))^n)^{k_l/n}$\\
 		$\leq \sum\limits_{k_1+...+k_{m_j}=n}
 	\dfrac{n!}{k_1!...k_{m_j}!}\left(\prod_{l=1}^{m_j}\mu (U_{j, l})^{k_l}\right)
 	\prod_{l=1}^{m_j}M(a_{j, l})^{k_l}$\\
 		$\leq \sum\limits_{k_1+...+k_{m_j}=n}
 	\dfrac{n!}{k_1!...k_{m_j}!}\left(\prod_{l=1}^{m_j}\mu (U_{j, l})^{k_l}\right)
 	\prod_{l=1}^{m_j}(\int\limits_{\Omega}(dd^cu (z, a_{j, l}))^n)^{k_l/n}$\\
 	$=\sum\limits_{k_1+...+k_{m_j}=n}
 	\dfrac{n!}{k_1!...k_{m_j}!}\prod_{l=1}^{m_j}\mu (U_{j, l})M(a_{j, l}))^{k_l}$\\
 	$=(\mu (U_{j, 1})M(a_{j, 1})+...+\mu (U_{j, k_{m_j}}M(a_{j, k_{m_j}})))^n$,
 \end{flushleft}
for all  $j\in\Z^+$. Since $a_{j,k}$ is arbitrary for every $j, k$, we have
\begin{equation}
	\int\limits_{\Omega}(dd^c\tilde{u}_j)^n \leq
	 \left(\sum\limits_{k=1}^{m_j}\mu(U_{j,k})\inf\limits_{U_{j,k}}M(a)\right)^n
	 \leq \left(\int\limits_X M(a)d\mu(a)\right)^n,
\end{equation}
for all  $j\in\Z^+$.

We will show that $\tilde{u}_j$ is decreasing to $\tilde{u}$ and use Proposition \ref{lem appr} to 
prove that $\tilde{u}\in\F(\Omega)$.

 For every $z\in\Omega, a\in X$ and $j\in\Z^+$, we define
 \begin{center}
 	$\phi_j (z, a)=\sum\limits_{k=1}^{m_j}\chi_{U_{j, k}}(a)\sup\limits_{a\in U_{j,k}}u(z, a)=
 	\sup\limits_{\xi\in U_{j,k(j, a)}}u(z, \xi)$,
 \end{center}
where $\chi_{U_{j, k}}$ is the  characteristic function of $U_{j, k}$ and
  $k(j, a)$ is given by $a\in U_{j, k(j, a)}$. Then, we have
  \begin{equation}\label{eq uj>u proofmain2}
   u_j(z)=\int\limits_X\phi_j(z, a)d\mu(a)\geq \int\limits_Xu(z, a)d\mu(a)=\tilde{u}(z),
  \end{equation}
 for every $z\in\Omega$ and $j\in\Z^+$.
 
 Note that $a\in U_{j+1, k(j+1, a)}\cap U_{j, k(j, a)}\neq\emptyset$. Then, by the construction
  of the sets $U_{j, k}$, we have $U_{j+1, k(j+1, a)}\subset U_{j, k(j, a)}$. Hence
  \begin{equation}\label{eq uj>uj+1 proofmain2}
   u_j(z)=\int\limits_X\phi_j(z, a)d\mu(a)\geq \int\limits_X\phi_{j+1}(z, a)d\mu(a)=u_{j+1}(z),
  \end{equation}
for every $z\in$ and $j\in\Z^+$.

By the semicontinuity of $u(z, \cdot)$, we have, 
\begin{equation}\label{eq u and phij proofmain2}
	u(z, a)\geq\lim\limits_{j\to\infty}(\sup\{u(z, \xi): |\xi-a|\leq 2^{-j}\})
	\geq\lim\limits_{j\to\infty} \phi_j (z, a),
\end{equation}
for every  $z\in\Omega$ and $a\in X$. By integrating the sides of
\eqref{eq u and phij proofmain2} with respect to $a$ and using Fatou's lemma, we get
\begin{equation}\label{eq uj<u proofmain2}
\tilde{u}(z)\geq\lim\limits_{j\to\infty}u_j(z).
\end{equation}
Combining \eqref{eq uj>u proofmain2}, \eqref{eq uj>uj+1 proofmain2} and \eqref{eq uj<u proofmain2}, we get
that $u_j$ is decreasing to $\tilde{u}$ as $j\rightarrow\infty$. Note that $u_j=\tilde{u}_j$ almost everywhere 
\cite[Proposition 2.6.2]{Kli91}, and then $\lim\limits_{j\to\infty}\tilde{u}_j=\tilde{u}$ almost everywhere.
Since $\lim\limits_{j\to\infty}\tilde{u}_j$ is either plurisubharmonic or identically $-\infty$, we have 
$\lim\limits_{j\to\infty}\tilde{u}_j=\tilde{u}$ everywhere.
Therefore, $\tilde{u}_j$ is decreasing to $\tilde{u}$ as $j\rightarrow\infty$.

 By Proposition \ref{lem appr}, $\max\{\tilde{u}, -k\}\in\F(\Omega)$ for $k>0$ and it implies that $\tilde{u}$ is not identically $-\infty$.
 Then, by using Proposition \ref{lem appr} for $\tilde{u}$, 
 we get that $\tilde{u}\in\F(\Omega)$. Moreover, since the sequence $\tilde{u}_j$ is decreasing, we have
 \begin{center}
 	 $\int\limits_{\Omega}(dd^c\tilde{u})^n\leq\liminf\limits_{j\to\infty}\int\limits_{\Omega}(dd^c\tilde{u}_j)^n\leq (\int\limits_X M(a)d\mu(a))^n$.
 \end{center}
\end{proof}
In order to prove Theorem \ref{main}, we need the following proposition:
\begin{Prop}\label{prop supersol}
	Let $\varphi\in\E(\Omega)$ and $u\in\F(\varphi)$. Define
	\begin{center}
		$\phi (u):=(\sup\{v\in PSH^-(\Omega): v+\varphi\leq u \})^*$.
	\end{center}
Then $\phi(u)\in\F$, $\phi(u)+\varphi\leq u$ and $(dd^c\phi(u))^n\leq \chi_U(dd^cu)^n$, where
$U=\Omega\cap\overline{\{u<\varphi\}}$.
\end{Prop}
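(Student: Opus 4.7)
My strategy is to verify the three claims in the order stated, reducing each to tools from the Preliminaries; the Monge-Amp\`ere estimate is the substantive step.

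For $\phi(u)\in\F$: since $u\in\F(\varphi)$, by definition there is $w\in\F$ with $w+\varphi\leq u$, so $w$ is a competitor in the supremum defining $\phi(u)$ and hence $\phi(u)\geq w$ pointwise. Because $\phi(u)\in PSH^-(\Omega)$ and $\max\{w,\phi(u)\}=\phi(u)$, Proposition \ref{prop max F}(b) applied with $w\in\F$ and $\phi(u)\in PSH^-(\Omega)$ immediately yields $\phi(u)\in\F$.

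For the pointwise bound $\phi(u)+\varphi\leq u$: let $S:=\{v\in PSH^-(\Omega):v+\varphi\leq u\}$. By definition of the upper semicontinuous regularization, there is a pluripolar set $E\subset\Omega$ with $\phi(u)(z)=\sup_{v\in S}v(z)\leq u(z)-\varphi(z)$ for every $z\in\Omega\setminus E$, so the inequality holds off $E$. To extend to $z_0\in E$, I apply the sub-mean-value property of the PSH functions $\phi(u)+\varphi$ and $u$ on a small ball $B=B(z_0,r)\Subset\Omega$: since $E$ has Lebesgue measure zero, $\frac{1}{|B|}\int_B(\phi(u)+\varphi)\,dV\leq\frac{1}{|B|}\int_B u\,dV$, and letting $r\to 0^+$ the left side tends to $\phi(u)(z_0)+\varphi(z_0)$ while the right tends to $u(z_0)$, giving the pointwise inequality at $z_0$.

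The substantive part is the Monge-Amp\`ere estimate $(dd^c\phi(u))^n\leq\chi_U(dd^cu)^n$. I would first reduce to the case $\phi(u)<0$: since $\Omega$ is connected and $\phi(u)\leq 0$ is PSH, the maximum principle forces either $\phi(u)\equiv 0$ (in which case the estimate is trivial) or $\phi(u)<0$ everywhere on $\Omega$. Assuming the latter, on the contact set $C:=\{\phi(u)+\varphi=u\}$ one has $u-\varphi=\phi(u)<0$, so $C\subset\{u<\varphi\}$ and $\overline{C}\subset U$; a standard envelope/balayage argument (adding a plurisubharmonic perturbation to $\phi(u)$ on a small ball in $\Omega\setminus\overline{C}$, which would contradict the maximality of $\phi(u)$ if any Monge-Amp\`ere mass lived there) shows $(dd^c\phi(u))^n$ vanishes off $\overline{C}$, hence its support lies in $U$. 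For the quantitative bound on $U$, I would truncate to bounded PSH data via $u_k:=\max\{u,-k\}$ and $\varphi_k:=\max\{\varphi,-k\}$, form the corresponding envelope $\phi_k$, observe that the bounded obstacle $u_k-\varphi_k$ is a viscosity supersolution of a Monge-Amp\`ere equation whose density is controlled by $(dd^cu_k)^n$ on $\{u_k<\varphi_k\}$, apply Proposition \ref{Prop GLZ supersolloc} to transfer the bound to $\phi_k$, and finally pass to the limit $k\to\infty$ using Proposition \ref{MA in F} and the standard convergence of Monge-Amp\`ere measures along decreasing sequences in $\F$.

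The main obstacle is precisely this last step: the obstacle $u-\varphi$ is a difference of two plurisubharmonic functions to which no Monge-Amp\`ere operator applies directly, so the Bedford-Taylor comparison principle is not available as a black box. Combining the truncation scheme above with the viscosity framework of \cite{GLZ19} seems unavoidable, and the delicate work lies in controlling the limit as $k\to\infty$ along the contact set and near $\partial U$, where the cancellation $\phi(u)+\varphi=u$ must be used to kill the spurious $(dd^c\varphi_k)^n$ contributions.
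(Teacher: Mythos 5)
Your treatment of the first two assertions matches the paper's: membership in $\F$ comes from a minorant $w\in\F$ together with Proposition \ref{prop max F}, and the pointwise inequality $\phi(u)+\varphi\leq u$ is obtained by upgrading the almost-everywhere inequality via the sub-mean value property of $\phi(u)+\varphi$ and $u$. Those parts are fine. The genuine gap is in the Monge--Amp\`ere estimate, and it sits exactly where you admit the ``delicate work'' lies. Your truncation $u_k=\max\{u,-k\}$, $\varphi_k=\max\{\varphi,-k\}$ produces an obstacle $u_k-\varphi_k$ that is a difference of two merely upper semicontinuous functions; it is in general not lower semicontinuous, so Proposition \ref{Prop GLZ supersolloc} (which requires a bounded \emph{lower semicontinuous} viscosity supersolution, and whose right-hand side is a density $f\,dV$ rather than the measure $(dd^cu_k)^n$) does not apply to it. The device you are missing is the one the paper uses: approximate $u$ from above by \emph{continuous} functions $u_j\in\E_0(\Omega)\cap C(\overline{\Omega})$ (Cegrell's approximation theorem) and take the obstacle $w_j=u_j-\max\{\varphi,u_j\}=\min\{u_j-\varphi,0\}$, which is (continuous)$-$(psh) and hence lower semicontinuous; a further smooth decreasing approximation of both functions inside Lemma \ref{lem1 supersol} turns the right-hand side into a genuine continuous density so the viscosity result applies, and Lemmas \ref{lem GLZ decrease seq}, \ref{lem: increasing sequence} and \ref{lem GLZ exhaustion} carry the inequality $(dd^cP_{\Omega}(w_j))^n\leq(dd^cu_j)^n$ back to the unregularized data. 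Moreover, you give no argument that your truncated envelopes $\phi_k$ converge to $\phi(u)$: the family $u_k-\varphi_k$ has no evident monotonicity in $k$, whereas the paper's $w_j$ decreases and is sandwiched by $\phi(u)\leq P_{\Omega}(w_{j+1})\leq P_{\Omega}(w_j)\leq 0$, which is what legitimizes passing the estimate to the limit.

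A secondary problem is your balayage claim that $(dd^c\phi(u))^n$ is carried by $\overline{\{\phi(u)+\varphi=u\}}$. The perturbation argument on a ball $B\Subset\Omega\setminus\overline{C}$ needs a uniform gap $u-\varphi-\phi(u)\geq\delta>0$ on $\overline{B}$ to guarantee that the modified competitor still satisfies the constraint; since $u-\varphi-\phi(u)$ is a difference of plurisubharmonic functions it is not lower semicontinuous, and strict positivity on a compact set yields no such $\delta$. What the proposition actually requires is only the vanishing of the mass on $\Omega\setminus U=\mathrm{Int}\{u=\varphi\}$, and there the argument is much simpler (and is the paper's): on that open set the constraint $v+\varphi\leq u$ reduces to $v\leq 0$, which is preserved by the Perron--Bremermann modification, so $\phi(u)$ is maximal plurisubharmonic there and $(dd^c\phi(u))^n=0$ on $\mathrm{Int}\{u=\varphi\}$.
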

We proceed through some lemmas.
\begin{Lem}\label{lem1 supersol}
	Let $u\in C(\overline{\Omega})\cap PSH(\Omega)$ and $v\in L^{\infty}(\Omega)\cap PSH(\Omega)$.
	Then, for every relatively compact pseudoconvex domain $W$ in $\Omega$, 
	$P_{\overline{W}}(u-v)\in L^{\infty}(W)\cap PSH(W)$
	 and $(dd^cP_{\overline{W}}(u-v))^n\leq (dd^c u)^n$ on $W$.
\end{Lem}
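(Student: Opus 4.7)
The plan is to apply Proposition \ref{Prop GLZ supersolloc} after approximating $u$ and $v$ by smooth strictly plurisubharmonic functions, taking advantage of the fact that on continuous obstacles the two envelopes $P_W$ and $P_{\overline W}$ coincide. Since $u$ is continuous on $\overline\Omega$ and $v$ is bounded, $u-v$ is bounded on $\overline W$ by some $M$; the constant $-M$ is a candidate in the defining sup (as $\widehat{-M}=-M\leq u-v$), and every candidate $w$ satisfies $w\leq M$, so $P_{\overline W}(u-v)\in L^\infty(W)\cap PSH(W)$ at once.

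For the Monge-Amp\`ere bound I would take smooth strictly PSH approximations $u_\epsilon\searrow u$ and $v_j\searrow v$ by standard mollification on a neighborhood of $\overline W$ (valid because $W\Subset\Omega$). For fixed $\epsilon,j$, the smooth function $u_\epsilon-v_j$ is a viscosity supersolution of $(dd^cX)^n=f_\epsilon dV$ with $f_\epsilon dV=(dd^cu_\epsilon)^n$: if a $C^2$ test $q$ touches $u_\epsilon-v_j$ from below at $z_0\in W$ with $dd^cq(z_0)\geq 0$, then $(u_\epsilon-v_j)-q$ has a local minimum at $z_0$, forcing $dd^cq(z_0)\leq dd^cu_\epsilon(z_0)-dd^cv_j(z_0)\leq dd^cu_\epsilon(z_0)$; comparing determinants of positive Hermitian matrices gives $(dd^cq(z_0))^n\leq f_\epsilon(z_0)dV$. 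Proposition \ref{Prop GLZ supersolloc} with $D=W$ then produces $(dd^cP_W(u_\epsilon-v_j))^n\leq (dd^cu_\epsilon)^n$ on $W$. The bridge to the lemma's statement is the identity $P_W(\phi)=P_{\overline W}(\phi)$ for continuous $\phi$: any $w\in PSH(W)$ with $w\leq\phi$ on $W$ automatically has $\hat w(\xi)=\limsup_{z\to\xi} w(z)\leq\limsup_{z\to\xi}\phi(z)=\phi(\xi)$ on $\partial W$, so the defining families for $P_W$ and $P_{\overline W}$ coincide.

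To finish I would pass to the limit in two stages. First, with $v_j$ fixed, $u_\epsilon-v_j\searrow u-v_j$ uniformly, so (by the direct analog of Lemma \ref{lem GLZ decrease seq}(b) for $P_{\overline W}$) $P_{\overline W}(u_\epsilon-v_j)\searrow P_{\overline W}(u-v_j)$, and Bedford-Taylor continuity of the Monge-Amp\`ere operator along decreasing bounded PSH sequences transfers the inequality to $(dd^cP_{\overline W}(u-v_j))^n\leq (dd^cu)^n$. Second, $u-v_j\nearrow u-v$ since $v_j\searrow v$, and Lemma \ref{lem: increasing sequence} together with $P_W=P_{\overline W}$ at the continuous level gives $P_{\overline W}(u-v_j)\nearrow P_{\overline W}(u-v)$ almost everywhere; Bedford-Taylor continuity along locally uniformly bounded increasing PSH sequences then yields $(dd^cP_{\overline W}(u-v))^n\leq (dd^cu)^n$. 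The main obstacle, I expect, is precisely this bookkeeping between the two flavors of envelope: the viscosity machinery (Proposition \ref{Prop GLZ supersolloc}) produces bounds only for $P_W$, and the resolution is to keep the obstacle continuous throughout the smoothing so that $P_W(\phi)=P_{\overline W}(\phi)$ can be invoked before either monotone limit is taken.
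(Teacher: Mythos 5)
Your proposal is correct and follows essentially the same route as the paper: bound the envelope between $u-\sup_W v$ and $u-\inf_W v$, mollify $u$ and $v$ near $\overline W$, verify that $u_\epsilon-v_j$ is a viscosity supersolution of $(dd^c w)^n=(dd^cu_\epsilon)^n$, invoke Proposition \ref{Prop GLZ supersolloc}, and then pass to the limit first decreasingly in the $u$-approximation (Lemma \ref{lem GLZ decrease seq}) and then increasingly in the $v$-approximation (Lemma \ref{lem: increasing sequence}), using Bedford--Taylor convergence at each stage. The only cosmetic difference is that you carry the identity $P_W(\phi)=P_{\overline W}(\phi)$ for continuous obstacles through the whole argument, whereas the paper works with $P_W$ and only lands on $P_{\overline W}(u-v)$ at the final increasing limit; your explicit verification of the viscosity supersolution property is a detail the paper leaves implicit.
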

\begin{proof}
	Since $u|_W-\sup_W v\leq P_{\overline{W}}(u-v)\leq u|_W-\inf_W v$, we have 
	$P_{\overline{W}}(u-v)\in L^{\infty}(W)$. It remains to show that
	 $(dd^cP_{\overline{W}}(u-v))^n\leq (dd^c u)^n$ on $W$.
	 
	Let $u_j, v_j$ be sequences of smooth plurisubharmonic functions on a neighborhood of $\overline{W}$
	such that $u_j\searrow u$ and $v_j\searrow v$ as $j\rightarrow\infty$. Then, for every $j, k\geq 1$, the
	function $u_j-v_k$ is a viscosity supersolution to the equation
	\begin{equation}\label{eq1 lem supersol}
	(dd^c w)^n=(dd^cu_j)^n,
	\end{equation}
	on $W$.  It follows from Proposition \ref{Prop GLZ supersolloc} that the function
	$P_{W}(u_j-v_k)\in L^{\infty}(W)\cap PSH(W)$ satisfies
	\begin{equation}\label{eq2 lem supersol}
		(dd^cP_{W}(u_j-v_k))^n\leq (dd^c u_j)^n, 
	\end{equation}
on $W$ in the pluripotential sense. Moreover, by Lemma \ref{lem GLZ decrease seq}, we have
\begin{equation}\label{eq3 lem supersol}
P_{W}(u_j-v_k)\searrow P_{W}(u-v_k),
\end{equation}
as $j\rightarrow\infty$.
Combining \eqref{eq2 lem supersol} and \eqref{eq3 lem supersol}, we have
	\begin{equation}\label{eq4 lem supersol}
(dd^cP_{W}(u-v_k))^n\leq (dd^c u)^n.
\end{equation}

 By Lemma \ref{lem: increasing sequence}, we also have $P_{W}(u-v_k)\nearrow P_{\overline{W}}(u-v)$ almost everywhere
 as $k\rightarrow\infty$. Therefore, by \eqref{eq4 lem supersol}, we have
\begin{center}
$(dd^cP_{\overline{W}}(u-v))^n\leq (dd^c u)^n$.
\end{center}

\end{proof}
\begin{Lem}\label{lem2 supersol}
	Let $u\in C(\overline{\Omega})\cap PSH(\Omega)$ and $v\in L^{\infty}(\Omega)\cap PSH(\Omega)$.
	Then $P_{\Omega}(u-v)\in L^{\infty}(\Omega)\cap PSH(\Omega)$ and $(dd^cP_{\Omega}(u-v))^n\leq (dd^c u)^n$.
\end{Lem}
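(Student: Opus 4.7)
The plan is to reduce the global statement to the local one from Lemma \ref{lem1 supersol} by exhausting $\Omega$ by relatively compact pseudoconvex subdomains and passing to the limit. First I would observe that $u-v$ is bounded (since both are bounded) and lower semicontinuous on $\Omega$: indeed $u$ is continuous and $-v$ is lower semicontinuous because the plurisubharmonic function $v$ is upper semicontinuous. This puts us in position to invoke Lemma \ref{lem GLZ exhaustion}.

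Next I would pick an increasing exhaustion $\{D_j\}_{j\geq 1}$ of $\Omega$ by relatively compact pseudoconvex (e.g.\ smooth strongly pseudoconvex) subdomains with $\bigcup_j D_j=\Omega$. By Lemma \ref{lem1 supersol}, for each $j$ the function $w_j:=P_{\overline{D_j}}(u-v)$ lies in $L^\infty(D_j)\cap PSH(D_j)$ and satisfies the Monge--Amp\`ere inequality
\begin{equation*}
(dd^c w_j)^n\leq (dd^c u)^n\quad\text{on }D_j,
\end{equation*}
in the pluripotential sense. Lemma \ref{lem GLZ exhaustion} then asserts that $w_j$ decreases to $P_\Omega(u-v)$ as $j\to\infty$. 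The uniform bound $u-\sup_\Omega v\leq w_j\leq u-\inf_\Omega v$ passes to the limit, so $P_\Omega(u-v)\in L^\infty(\Omega)$, and being a decreasing limit of plurisubharmonic functions bounded below, it is plurisubharmonic on $\Omega$.

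It remains to prove the Monge--Amp\`ere estimate on all of $\Omega$. Here I would work locally: fix $k$ and consider any relatively compact subdomain $D_k\Subset\Omega$. For every $j\geq k$ the sequence $\{w_j\}_{j\geq k}$ is a uniformly bounded decreasing sequence of plurisubharmonic functions on $D_k$ converging to $P_\Omega(u-v)$, so by the Bedford--Taylor continuity of the Monge--Amp\`ere operator under decreasing sequences of bounded plurisubharmonic functions, $(dd^c w_j)^n$ converges weakly to $(dd^c P_\Omega(u-v))^n$ on $D_k$. Since each $(dd^c w_j)^n\leq (dd^c u)^n$ on $D_k$, the inequality survives the weak limit, giving $(dd^c P_\Omega(u-v))^n\leq (dd^c u)^n$ on $D_k$. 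Letting $k\to\infty$ yields the inequality on all of $\Omega$.

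The only delicate point I anticipate is checking the hypothesis of Lemma \ref{lem GLZ exhaustion}, namely the lower semicontinuity of $u-v$; this is clean because $u\in C(\overline\Omega)$ and $v$ is upper semicontinuous. Everything else is routine exhaustion and the standard Bedford--Taylor convergence for bounded decreasing plurisubharmonic sequences.
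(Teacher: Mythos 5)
Your proof is correct and follows essentially the same route as the paper: exhaust $\Omega$ by relatively compact pseudoconvex subdomains, apply Lemma \ref{lem1 supersol} on each, use Lemma \ref{lem GLZ exhaustion} to get the decreasing convergence of $P_{\overline{D_j}}(u-v)$ to $P_\Omega(u-v)$, and pass to the limit via Bedford--Taylor continuity under decreasing sequences. Your explicit verification of the lower semicontinuity of $u-v$ is a detail the paper leaves implicit, but the argument is the same.
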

\begin{proof}
	Since $u-\sup_{\Omega} v\leq P_{\Omega}(u-v)\leq u-\inf_{\Omega}v$, we have  
		$P_{\Omega}(u-v)\in L^{\infty}(\Omega)\cap PSH(\Omega)$.
	Let $(\Omega_j)$ be an increasing sequence of relatively compact pseudoconvex domains in $\Omega$ such that 
	$\cup_{j\in\Z^+}\Omega_j=\Omega$. It follows from Lemma \ref{lem1 supersol} that 
	\begin{center}
	$(dd^cP_{\overline{\Omega_j}}(u-v))^n\leq (dd^c u)^n$,
	\end{center}
	 on $\Omega_j$ for every $j\in\Z^+$.
	 Moreover, by Lemma \ref{lem GLZ exhaustion}, we have $P_{\overline{\Omega_j}}(u-v)$ decreases to $P_{\Omega}(u-v)$.
	 	Hence, we have
	 	\begin{center}
	 			$(dd^cP_{\Omega}(u-v))^n\leq (dd^c u)^n$, on $\Omega$.
	 	\end{center}
\end{proof}
\begin{proof}[Proof of Proposition \ref{prop supersol}]
	By the assumption, there exists $v\in\F$ such that $v+\varphi\leq u\leq\varphi$. Then $v\leq\phi (u)\leq 0$. It follows from
	Proposition \ref{prop max F} that $\phi (u)\in\F$.
	By the definition of $\phi(u)$, we have $\phi(u)+\varphi\leq u$ almost everywhere. Therefore, by the subharmonicity of
	$\phi(u)+\varphi$ and $u$, we have  $\phi(u)+\varphi\leq u$.
	 It remains to show that $(dd^c\phi(u))^n\leq (dd^cu)^n$.
	
	Since $u\in PSH^-(\Omega)$, it follows from \cite[Theorem 2.1]{Ceg04} that there exists a sequence of functions
	$u_j\in\E_0(\Omega)\cap C(\overline{\Omega})$ such that $u_j\searrow u$ as $j\rightarrow\infty$. For every $j\in\Z^+$,
	we denote
	\begin{center}
		$w_j=u_j-\max\{\varphi, u_j\}$.
	\end{center}
We have
\begin{center}
	$ w_j=u_j-\dfrac{\varphi+u_j+|\varphi-u_j|}{2}=\dfrac{u_j-\varphi-|\varphi-u_j|}{2}=\min\{-\varphi+u_j, 0 \}$.
\end{center}
Then
\begin{equation}\label{eq1 prop supersol}
\phi (u)\leq w_{j+1}\leq w_j\leq 0,
\end{equation}
for every $j\in\Z^+$. Hence
\begin{equation}\label{eq2 prop supersol}
\phi(u)\leq P_{\Omega}(w_{j+1})\leq P_{\Omega}(w_j)\leq 0,
\end{equation}
for every $j\in\Z^+$. In particular, $ P_{\Omega}(w_j)\in\F(\Omega)$ for every $j$.

Since $P_{\Omega}(w_j)+\max\{\varphi, u_j\}$ and $u_j$ are plurisubharmonic and  $P_{\Omega}(w_j)+\max\{\varphi, u_j\}\leq w_j$
almost everywhere,
we have $P_{\Omega}(w_j)+\max\{\varphi, u_j\}\leq u_j$ for all $z\in\Omega$. Letting $j\rightarrow\infty$, we get
\begin{equation}\label{eq3 prop supersol}
\lim\limits_{j\to\infty}P_{\Omega}(w_j)+\varphi\leq u.
\end{equation}
Combining \eqref{eq2 prop supersol} and \eqref{eq3 prop supersol}, we get
$P_{\Omega}(w_j)\searrow\phi(u)$
as $j\rightarrow\infty$.
Moreover, by Lemma \ref{lem2 supersol}, we have $(dd^cP_{\Omega}(w_j))^n\leq (dd^cu_j)^n$. Therefore, by letting
$j\rightarrow\infty$, we obtain $(dd^c\phi(u))^n\leq (dd^cu)^n$. Observe that $\phi(u)$ is
 maximal plurisubharmonic (see \cite{Sad81}, \cite{Kli91} for the definition)
  on $\Omega\setminus \overline{\{u<\phi\}}=Int\{u=\phi\}$. Then, we have
 $(dd^c\phi(u))^n=0$ on $\Omega\setminus \overline{\{u<\phi\}}$. Thus
 $(dd^c\phi(u))^n\leq \chi_U(dd^cu)^n$.
\end{proof}
\begin{proof}[Proof of Theorem \ref{main}]
As in the proposition \ref{prop supersol}, for all $a\in X$, we define
\begin{center}
	$\begin{array}{rl}
\phi (u)(\cdot, a)&:=(\sup\{v\in PSH^-(\Omega): v+f\leq u(\cdot, a)\})^*\\
&\: =\sup\{v\in\F(\Omega): v+\varphi\leq u(\cdot, a)\}.
	\end{array}$
\end{center}
 For every $a\in X$, we have
\begin{center}
	$\begin{array}{ll}
	u(z, a)\geq\limsup\limits_{\xi\to a}u(z, \xi)&\geq 
	\limsup\limits_{\xi\to a}(\phi(u)(z, \xi)+f(z, \xi))\\
	& \geq
	\limsup\limits_{\xi\to a}\phi(u)(z, \xi)+\liminf\limits_{\xi\to a}f(z, \xi)\\
	&\geq \limsup\limits_{\xi\to a}\phi(u)(z, \xi)+f(z, a).
	\end{array}$
\end{center}
 Hence
\begin{center}
	$\phi (u)(z, a)\geq \limsup\limits_{\xi\to a}\phi(u)(z, \xi)$.
\end{center}
Moreover, by Proposition \ref{prop supersol}, we have
\begin{center}
	$\int\limits_{\Omega}(dd^c\phi (u)(z, a))^n\leq \int\limits_{U_a}(dd^cu(z, a))^n\leq (M(a))^n.$
\end{center}
Hence, the function $\phi (u)$ satisfies the assumptions of 
Proposition \ref{prop f=0}. Then $\widetilde{\phi (u)}:=\int_X\phi (u)d\mu(a)\in\F(\Omega)$.
 In the other hand, we have
\begin{center}
	$\widetilde{\phi (u)}+\tilde{f}\leq\tilde{u}\leq\tilde{f}.$
\end{center}
Thus $\tilde{u}\in\F(\tilde{f})$.
\end{proof}

\end{document}